\DeclareMathOperator{\proj}{proj}
\DeclareMathOperator{\Alpha}{Alpha}
\newcommand{\timesteps}{\mathcal{T}}
\newcommand{\neurons}{\mathcal{N}}
\newcommand{\betti}{\beta}
\newcommand{\grad}{\nabla}
\newcommand{\dataset}{\mathcal{D}}
\newcommand{\thesphere}{S^{d-1}}
\newcommand{\domx}{\mathcal{X}}
\newcommand{\domy}{\mathcal{Y}}
\DeclareMathOperator{\supp}{supp}
\DeclareMathOperator{\im}{Im}
\DeclareMathOperator{\chull}{conv}
\newcommand{\ret}{F}
\newcommand{\hemi}{H}
\newcommand{\hemistrict}{H^{+}}
\newcommand{\rplus}{\mathbb{R} \cup \{\infty\}}
\newtheorem*{thma}{Theorem A}
\newtheorem{thm}{Theorem}
\newtheorem{lemma}{Lemma}
\newtheorem{prop}{Proposition}
\theoremstyle{definition}
\newtheorem{defn}{Definition}
\newtheorem{remark}{Remark}
\theoremstyle{definition}
\newtheorem{alg}{Algorithm}
\title{Alpha shapes and optimal transport on the sphere}
\author{Erik Carlsson and Greg DePaul}
\begin{document}
\maketitle

\begin{abstract}
In \cite{carlsson2024kernel}, the authors used 
the Legendre transform to give a tractable method for studying Topological Data Analysis (TDA) in terms of sums of Gaussian kernels.
In this paper, we prove a variant
for sums of cosine similarity-based kernel functions,
which requires considering 
the more general ``$c$-transform'' from optimal transport theory \cite{villani2008old}. 
We then apply these methods to a point cloud 
arising from a recent breakthrough study, which exhibits a toroidal structure in the brain activity of rats \cite{Gardner2022}. A key part of this application 
is that the transport map and 
transformed density function arising from the theorem replace certain delicate preprocessing steps related to density-based denoising and subsampling.

\end{abstract}

\section{Introduction}

Let $\dataset=\{x_i\}_{i=1}^N\subset \mathbb{R}^d$ be a point cloud, and let $h>0$ be a positive number called a scale,
or bandwidth parameter. Then a Gaussian kernel density estimator is a function $f:\mathbb{R}^d\rightarrow \mathbb{R}_{+}$ of the form
\begin{equation}
\label{eq:gausskde}    
f(x)=\sum_{i} a_i \exp(-\lVert x-x_i\rVert^2/2h^2)
\end{equation}
for some coefficients $a_i>0$. The function $K_h(x,y)=\exp(-\lVert x-y\rVert/2h^2)$ is called the kernel function. If they can 
be computed, the topological types of the super-level sets $\{f\geq c\}$ can be taken as 
a robust definition of the ``shape'' 
of $\dataset$ \cite{bubenik2012statistical,phillips2015geometric,chazal2011clustering}.
The problem is that it is impractical to model them by a (filtered) simplicial complex in all but very low dimension, because the number of vertices required is related to the $\epsilon$-covering number, which scales exponentially with the embedding dimension $d$.
In other words, the kernel functions introduce a problem with the \emph{curse of dimensionality}, even in the case of a single data point.

In \cite{carlsson2024kernel}, J. Carlsson and the first author presented a method to resolve this problem using the Legendre transform. 
Setting $E(x)=-\log(f(x))$, it was shown that there exists a transformed function $\alpha$ 
on the interior of the convex hull $\chull(\dataset)$, which
satisfies
\begin{equation}
\label{eq:kdetilde}    
E(x)=\inf_{y\in \mathbb{R}^d} \alpha(y) +\lVert x-y\rVert^2/2h^2.
\end{equation}
It was proved that the inclusion of the sublevel sets
$\{\alpha \leq a\}\subset \{E\leq a\}$ induces a homotopy equivalence with an explicit inverse, so that we may consider the smaller one without affecting the topology. On the other hand, since 
$\{\alpha\leq a\} \subset \chull(\dataset)$ 
in an explicit way, we see that 
their covering numbers are unaffected by taking linear isometries $\mathbb{R}^d \hookrightarrow \mathbb{R}^{d'}$, and so do not suffer from the above scalability issues.

The natural simplicial complex on 
which to model $\{\alpha\leq a\}$
is a fundamental construction from computational topology 
known as an \emph{alpha complex} \cite{edelsbrunner1983shape,edelsbrunner1995union,edelsbrunner1992weighted,edelsbrunner2010computational}, whose geometric realizations are called \emph{alpha shapes}. Using a recent algorithm
based on the powerful
duality principle
in mathematical optimization
\cite{carlsson2023alpha}, it becomes possible to compute these complexes in higher dimension, resulting in refined geometric models of density landscapes.
Interestingly, the map that induces the homotopy 
inverse is important for selecting the 
vertices of the alpha complex.

While we have framed this problem in terms of persistent homology, there are broader problems related to robust subsampling and denoising, or to determining a geometric cover of a point cloud with varying density.


\subsection{Spherical kernel functions}

In this paper, we study an extension of the results of \cite{carlsson2024kernel} to spherical point clouds
$\dataset\subset S^{d-1}$, and
a kernel function based on cosine similarity:
\begin{equation}
\label{eq:introspherekde}    
f(x)=\sum_{i} a_i \max(x\cdot x_i,0)^t,
\end{equation}
the multiplication being the usual dot product.
For spherical data, Gaussian kernels are not well-suited because there is an undesirable lower bound of $K_h(x,y) \geq e^{-2/h^2}$ for all points on the sphere, since the distance between antipodal points is 2. 
By contrast, the
kernel function \eqref{eq:introspherekde} is zero on all pairs of points which are orthogonal or form an obtuse angle.

In order to extend the density results to the sphere, we require a concept known as $c$-convexity, as defined in Villani's well-known book \cite{villani2008old}: let $\domx$ be a space, and let
$c:\domx\times \domy\rightarrow \mathbb{R}\cup\{\infty\}$ be a function called a \emph{cost function}. A function
$\psi:\domx\rightarrow \mathbb{R}$ is called \emph{$c$-convex} if there exists a conjugate function $\psi^c:\domy\rightarrow \mathbb{R} \cup \{-\infty\}$ satisfying
\begin{equation}
\label{eq:introcconv}    
\psi(x)=\sup_{y\in \domy} (\psi^c(y)-c(x,y)),\ \ 
\psi^c(y)=\inf_{x\in \domx} (\psi(x)+c(x,y)).
\end{equation}
In terms of \eqref{eq:kdetilde}, 
$\psi(x)=-E(x)$ would be $c$-convex
for $c(x,y)=\lVert x-y\rVert^2/2h^2$
with $c$-transform $\psi^c(y)=-\alpha(y)$.
In optimal transport, $c$-convex functions can be used to construct transport maps
$T: \domx \rightarrow \domy$, according to the rule that $y=T(x)$ when $y$ attains the supremum in \eqref{eq:introcconv}.

We can now state our main result.
\begin{thma}
\label{thm:intro}
Let $f(x)$ be as in \eqref{eq:introspherekde} for a fixed $t>1$, let $\domx=\supp(f)$ be its support and let $\domy=S^{d-1}$. Define
$\psi:\domx\rightarrow \mathbb{R}$ by
$\psi(x)=\log(f(x))$, and define a cost function by
$c(x,y)=-t\log(\max(x\cdot y,0))$.
Then
\begin{enumerate}
    \item We have that $\psi$ is $c$-convex,
    and it admits an explicit transport map
    $T:\domx \rightarrow  \domy$,
   whose image is $\im(T)=\{y\in \domy: \psi^c(y)> -\infty\}$.
\item The inclusion map $\domy(a)\hookrightarrow \domx(a)$ induces 
a homotopy equivalence, where
$\domx(a)=\{\psi \geq a\}$, $\domy(a)=\{\psi^c\geq a\}$
are the super-level sets. The inverse homotopy is represented by the (non-surjective) restriction of $T$ to 
$\domx(a)$.

\item  If $\dataset$ is contained in the sphere $S(V)=V\cap S^{d-1}$ for 
$V\subset \mathbb{R}^d$ a linear subspace, then 
$\psi^c$ is the extension by $-\infty$ of the corresponding conjugate function
$\bar{\psi}^c$ on $S(V)$ associated to 
$\bar{f}=f|_{S(V)}$.
\end{enumerate}
\end{thma}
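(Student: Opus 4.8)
The plan is to prove the three parts essentially in order, since part (2) depends on the transport map constructed in (1), while (3) is a compatibility statement that can be read off once the conjugate is computed explicitly. The starting point for (1) is to compute $\psi^c$ directly from the definition $\psi^c(y)=\inf_{x\in\domx}(\psi(x)+c(x,y))=\inf_{x}\bigl(\log f(x)-t\log\max(x\cdot y,0)\bigr)$, i.e.\ $\psi^c(y)=\inf_x\log\bigl(f(x)/\max(x\cdot y,0)^t\bigr)$. Plugging in $f(x)=\sum_i a_i\max(x\cdot x_i,0)^t$, the quantity inside the logarithm is $\sum_i a_i\bigl(\max(x\cdot x_i,0)/\max(x\cdot y,0)\bigr)^t$. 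The key geometric observation is that for fixed $y$ this is minimized at a point $x$ where the ratios $\max(x\cdot x_i,0)/\max(x\cdot y,0)$ are simultaneously as small as possible — and on the sphere, moving $x$ toward $-y$ drives $\max(x\cdot y,0)$ toward its support boundary. I expect the honest content here to be a Lagrange-multiplier computation on $\thesphere$: set up $\nabla_x\bigl(\log f(x)-t\log(x\cdot y)\bigr)=\lambda x$, which rearranges to $\frac{\nabla f(x)}{f(x)}-\frac{t}{x\cdot y}\,y=\lambda x$, i.e.\ $y$ must lie in the plane $\spn\{x,\nabla f(x)\}$, and then project to the sphere. This pins down $T$: given $x\in\domx$, the critical $y$ is the normalization of the component of $\nabla\log f(x)$ suitably corrected — concretely $T(x)$ is the point on $\thesphere$ making the above stationarity hold, and one checks it attains the supremum in \eqref{eq:introcconv} (not just a critical point) using that $t>1$ makes $u\mapsto u^t$ strictly convex, so $f$ restricted to the relevant arc is strictly log-concave in the right variable. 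The claim $\im(T)=\{\psi^c>-\infty\}$ then follows because $\psi^c(y)=-\infty$ exactly when the infimum defining it is not attained, which happens precisely when no $x$ is stationary for that $y$.

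For part (2), the strategy mirrors the Gaussian case of \cite{carlsson2024kernel}. The inclusion $\domy(a)\hookrightarrow\domx(a)$ makes sense once we know $\psi^c\le\psi$ on the overlap of domains — which is immediate from \eqref{eq:introcconv} since $\psi(x)=\sup_y(\psi^c(y)-c(x,y))\ge\psi^c(x)-c(x,x)=\psi^c(x)$ (using $c(x,x)=-t\log 1=0$). So $\{\psi^c\ge a\}\subset\{\psi\ge a\}$. For the homotopy equivalence I would exhibit $T|_{\domx(a)}:\domx(a)\to\domy(a)$ as a deformation retraction up to homotopy: first show $T$ does land in $\domy(a)$ when restricted to $\domx(a)$, by checking $\psi^c(T(x))\ge\psi(x)$ — this is the reverse inequality to $c$-convexity and holds at the optimizer, giving $\psi^c(T(x))=\psi(x)-c(x,T(x))+\bigl(\text{stuff}\bigr)$; more precisely at the supremum $\psi(x)=\psi^c(T(x))-c(x,T(x))$, and since $c\ge 0$ we get $\psi^c(T(x))\ge\psi(x)\ge a$. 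Then the straight-line homotopy along geodesics on the sphere from $x$ to $T(x)$ should stay inside $\domx(a)$: this is where one needs a monotonicity or log-concavity estimate along the geodesic, which I would derive from the same $t>1$ strict convexity. The composition in the other direction, $\domy(a)\to\domx(a)\xrightarrow{T}\domy(a)$, should be shown homotopic to the identity by an analogous geodesic homotopy, using that $T$ restricted to the image behaves like a retraction. \textbf{I expect this gluing of the two homotopies, and in particular verifying that the geodesic paths never leave the respective super-level sets, to be the main obstacle} — it is the spherical analogue of the convexity arguments in \cite{carlsson2024kernel}, but geodesic convexity on $\thesphere$ is more delicate than linear convexity in $\mathbb{R}^d$, so the estimates must be done carefully and may require restricting attention to the hemisphere where the relevant dot products are positive.

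For part (3), suppose $\dataset\subset S(V)$ with $V\subset\mathbb{R}^d$ linear. Write $x=x_V+x_{V^\perp}$ for the orthogonal decomposition. Since every $x_i\in V$, we have $x\cdot x_i=x_V\cdot x_i=\lVert x_V\rVert\,(x_V/\lVert x_V\rVert)\cdot x_i$, so $f(x)=\lVert x_V\rVert^t\,\bar f(x_V/\lVert x_V\rVert)$ whenever $x_V\ne 0$ (and $f(x)=0$ when $x\in V^\perp$). Now compute $\psi^c(y)=\inf_x\log\bigl(f(x)/\max(x\cdot y,0)^t\bigr)$. If $y\in S(V)$, then $x\cdot y=x_V\cdot y$, and the ratio becomes $\bigl(\lVert x_V\rVert\bar f(x_V/\lVert x_V\rVert)^{1/t}/\max(x_V\cdot y,0)\bigr)^t = \bigl(\bar f(x_V/\lVert x_V\rVert)^{1/t}/\max((x_V/\lVert x_V\rVert)\cdot y,0)\bigr)^t$, which depends only on $x_V/\lVert x_V\rVert\in S(V)$; minimizing over all $x\in\thesphere$ therefore gives exactly $\bar\psi^c(y)$, the conjugate computed intrinsically on $S(V)$. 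If instead $y\notin S(V)$, I claim $\psi^c(y)=-\infty$: choose a sequence $x^{(n)}\in\thesphere\cap\domx$ with $x^{(n)}_V/\lVert x^{(n)}_V\rVert$ fixed in the support of $\bar f$ but $\lVert x^{(n)}_V\rVert\to 1$ along a direction where $x^{(n)}\cdot y\to 0^+$ faster than $f(x^{(n)})$ — this is possible because the $V^\perp$-component of $y$ is nonzero, so one can tilt $x^{(n)}$ slightly out of $V$ toward the nodal set $\{x\cdot y=0\}$ without killing $f$, forcing the logarithm to $-\infty$. Hence $\psi^c$ is the extension of $\bar\psi^c$ by $-\infty$ off $S(V)$, as claimed. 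The only subtlety is confirming that such a tilting sequence exists, i.e.\ that the zero set of $x\mapsto x\cdot y$ genuinely meets the cone over $\supp(\bar f)$ nontangentially when $y\notin V$; this follows because $\{x\cdot y=0\}$ is a great sphere not containing $S(V)$, hence meets the open set $\supp(\bar f)\subset S(V)$ transversally in dimension.
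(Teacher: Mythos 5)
Your proposal gets the overall contour right — reduce to a stationarity condition to define $T$, use the identity $\psi^c(T(x))=\psi(x)+c(x,T(x))$ to push super-level sets around, and exploit the factorization $f(x)=\lVert x_V\rVert^t\bar f(\rho(x_V))$ for part~(3) — but there are two genuine gaps, both in places you flagged as delicate.

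\textbf{Part (1): the global-minimum claim.} You assert that $f$ restricted to an arc is ``strictly log-concave,'' hence the stationary point is a global minimizer. This is false in general: $\log f = \log\sum_i a_i\max(\cos(u-u_i),0)^t$ is \emph{not} concave along the arc when $\dataset$ has several peaks, since the log of a sum of log-concave summands need not be log-concave (this is visible already in Figure~\ref{fig:cconv}, where $\psi$ has several local maxima and minima). The paper's mechanism is different: it reduces to the circle via the restriction lemma (Lemma~\ref{lem:rescosim}), computes $A_{uu}(u,\tilde T(u))\ge 0$ \emph{only along the graph of the transport map} via Cauchy--Schwarz, and combines this with $A_{uv}<0$ to deduce that the lifted transport map $\tilde T$ is weakly increasing (Lemma~\ref{lem:onedim}, item~\ref{item:monotone}). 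Global minimality in item~\ref{item:uniquemax} is then a monotonicity argument (if two distinct $u,u^*$ both mapped to $v^*$, $\tilde T$ would be constant on the interval between them, forcing $A(\cdot,v^*)$ to be constant, a contradiction). Your Lagrange-multiplier computation identifies the same $T$, but without some version of this monotonicity you cannot rule out other global minimizers and hence cannot conclude $c$-convexity.

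\textbf{Part (2): the geodesic homotopy does not stay in $\domx(a)$.} You propose the geodesic $H(x,s)=\rho((1-s)x+sT(x))$ as a homotopy from $\mathrm{id}_{\domx(a)}$ to $\iota\circ T$, and correctly identify verifying that it stays in $\domx(a)$ as the obstacle. In fact the paper's argument shows it does \emph{not}: the inequality $\psi(x)+c(x,T(x))\le\psi(z)+c(z,T(z))$ proved along the geodesic (\eqref{eq:cconvarc}) gives $\psi^c(T(z))\ge\psi^c(T(x))\ge a$, i.e.\ the geodesic stays in $T^{-1}(\domy(a))$, which is generally strictly larger than $\domx(a)$ (this is exactly the reason $T|_{\domx(a)}$ is non-surjective onto $\domy(a)$). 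The paper resolves this by a two-step factorization: first $\domy(a)\hookrightarrow T^{-1}(\domy(a))$ is a homotopy equivalence because the fibers $T^{-1}(y)$ are geodesically convex (hence contractible), and then a \emph{separate} deformation retraction $\ret(x,s)=\rho((1-s)x+sx')$ of $T^{-1}(\domy(a))$ onto $\domx(a)$, where $x'$ is the nearest point of $x$ on the cost-ball $B_c(T(x);\psi^c(T(x))-a)$ — crucially $x'$ is in general short of $T(x)$, and it lies in $\domx(a)$ because $\psi\ge\psi^c(y)-c(\cdot,y)$. Your direct route cannot be repaired simply by more careful estimates; the intermediate set $T^{-1}(\domy(a))$ and the ball-projection retract are needed.

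\textbf{Part (3)} is essentially correct and is a viable alternative to the paper's route through Lemma~\ref{lem:rescosim}: writing $x=\cos\theta\,\bar x+\sin\theta\,w$ with $\bar x\in S(V)$, $w\in S(V^\perp)$, one has $\psi(x)+c(x,y)=\bar\psi(\bar x)+\bar c(\bar x,y)$ for $y\in S(V)$ (the $\cos\theta$ factors cancel), giving $\psi^c|_{S(V)}=\bar\psi^c$, and for $y\notin S(V)$ with $y_{V^\perp}\ne 0$, taking $w=\rho(y_{V^\perp})$ and $\theta\to\pi/2^-$ keeps $x\cdot y$ bounded away from zero while $f(x)=\cos^t\theta\,\bar f(\bar x)\to 0$, so $\psi^c(y)=-\infty$. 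The paper instead deduces this from the factorization $T(x)=T(\rho(\proj_V x))$ together with $\im(T)=\{\psi^c>-\infty\}$; both arguments are sound.
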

To prove the first part, we use compatibilities of our construction with respect to restriction to subspaces to reduce to the one-dimensional case of the circle, in which case we also find that the transport map $T$ is increasing as a function of the angle. These compatibilities also easily prove the third statement, which in particular shows that the 
$\epsilon$-covering numbers of $\domy(a)$
are unchanged by taking linear isometric embeddings, unlike those of $\domx(a)$. This is a key property which is special to our choice of cost and kernel function, and in fact this will not hold for other natural choices 
(for instance, taking Gaussian kernels 
on the tangent space).

To prove the second statement about the homotopy equivalence, we determine a deformation retract, but not one determined by $T$, whose restriction to $\domx(a)$ does not surject onto $\domy(a)$. Instead, similar to what was done in \cite{carlsson2024kernel}, we deduce that $T$ induces a homotopy equivalence 
from the preimage 
$T^{-1}(\domy(a))\twoheadrightarrow \domy(a)$, 
which is surjective (and in fact 
is injective when $\supp(f)=S^{d-1}$). 
We then define a deformation retraction
of $T^{-1}(\domy(a))$ onto $\domx(a)$,
making use of the explicit formula
$\psi^c(T(x))=\psi(x)+c(x,T(x))$,
which can be deduced from
the first item and \eqref{eq:introcconv}.


\subsection{Application to a Neurological data set}

We apply our construction to a recent 
study involving TDA and the brain activity of rats \cite{Gardner2022}.
Among other things, 
that article found that signals from
certain modules of 100-200 
neurons in the cortex
formed a toroidal shape, which was confirmed by showing that the persistent cohomology formed the Betti numbers of
$(\beta_0,\beta_1,\beta_2)=(1,2,1)$.
Part of that analysis required sophisticated methods to obtain a subsample lying near a smooth manifold, based on a combination of $k$-nearest neighbors density estimation, a method known as topological denoising, and certain components of the UMAP algorithm
\cite{carlsson2009denoising,McInnes2018}.

In this paper, 
we effectively use the transport map 
$T$ and the convex conjugate bound $\psi^c\geq a$ to replace the
manifold approximation and subsampling steps.
The essential hyperparameters in this setup
are the values of
$t,a$ from the theorem, as well as a real number
$0<s<1$ which controls the separation between vertices. In particular, there are no parameters whose optimal value is sensitive to the 
sample size $N$, 
such as the value of
$k$ in the $k$-nearest neighbors step.
By constructing an alpha complex on the resulting vertex set, we find that we 
recover the homology of a torus exactly (without using persistence), using 
a simplicial complex with around 
200 vertices.
We then present some low-dimensional embeddings which reveal a clear 
toroidal shape, calculated from only the 1-skeleton of an alpha complex with a somewhat higher vertex count. This is done using a method based on the Metropolis algorithm, with a loss function based on KL-divergence, similar to $t$-SNE. 

\subsection{Acknowledgments}

E. Carlsson was partially supported by (ONR)
N00014-20-S-B001 during this project, which he gratefully acknowledges.

\section{Background on optimal transport}

We recall some definitions from optimal transport theory, referring to \cite{villani2008old} for more details.

\begin{defn}
\label{def:cconv} 
Let $\domx$ and $\domy$ be sets, and let
$c:\domx\times \domy\rightarrow \rplus$
be a cost function. A function
$\psi:\domx\rightarrow \mathbb{R}$ is said to be $c$-convex if 
there exists a function $\zeta:\domx\rightarrow \mathbb{R} \cup \{-\infty\}$ satisfying
\begin{equation}
    \label{eq:cconv}
\psi(x)=\sup_{y\in \domy} (\zeta(y)-c(x,y)).
\end{equation}
In this case, we have the $c$-transform
\begin{equation}
\label{eq:ctransform}
\psi^c(x)=\inf_{x\in \domx} (\psi(x)+c(x,y)),
\end{equation}
which satisfies $\psi^{cc}=\psi$, where
\begin{equation}
\label{eq:psicc}\psi^{cc}(x)=\sup_y \inf_{x'} \left(\psi(x')+c(x',y)-c(x,y)\right).
\end{equation}
The \emph{$c$-subdifferential} is the set
\begin{equation}
\label{eq:subdiff}    
\partial_c \psi=\left\{(x,y) \in \domx\times \domy:
\psi^c(y)-\psi(x)=c(x,y)\right\}.
\end{equation}

\end{defn}
For simplicity, 
we are excluding infinity from the codomain of $\psi$, 
as our functions are finite-valued.

In optimal transport, $c$-convex functions can be used to construct transport maps, whose graph determines the subdifferential
$\partial_c \psi=\{(x,T(x))\}$. 
In the smooth setting,
this is done by translating 
\eqref{eq:subdiff} into the differential criteria
    \begin{equation}
\label{eq:transgrad}    
\grad \psi(x)+\grad_x c(x,y)
\end{equation}
whenever $y=T(x)$. If the function
$\grad_x c(x,\_ )$ is injective for all $x$, a property known as the \emph{twist condition}, then \eqref{eq:transgrad} is enough to uniquely characterize $T$. See the discussion 
at the beginning of Chapter 10 of \cite{villani2008old}.

Transport maps appear as solutions the Monge problem, which seeks to minimize the integral of 
\[\int_{\domx} c(x,T(x)) d\mu(x),\] 
subject to the constraint that $T_*(\mu)=\nu$
for two input measures $\mu,\nu$ on $\domx$ and $\domy$.
In this paper, the reasoning is reversed, as the function $\psi$ is essentially the input. 
The measures $\mu,\nu$ are secondary, but turn out to be important for sampling in a coordinate-independent way.

\section{Alpha shapes for spherical point clouds}

We define kernel density estimators on the sphere, as well as our corresponding transport maps. We then state and prove our main result,
which is Theorem \ref{thm:cconv}.

\subsection{Kernel functions for cosine similarity}

If $V$ is an inner product space, we will write $S(V)$ for the unit sphere in $V$, and
also let $S^{d-1}$ denote the unit sphere in $\mathbb{R}^d$.  
For any $x\in S^{d-1}$,
let $\hemi(x)=\left\{y\in S^{d-1}: x\cdot y\geq 0\right\}$ denote the hemisphere centered at $x$, and let
$\hemistrict(x)$ denote its interior, in which we have strict inequality.
We also have the normalization map $\rho: \mathbb{R}^d-\{0\}\rightarrow \thesphere$ given by
$\rho(x)=x/\lVert x\rVert$.
For any $x \in \thesphere$, we have a map
$\phi_x:\hemistrict(x)\rightarrow T_x S^{d-1}$ 
defined by $\phi_x(y)=y/(y\cdot x)$, identifying $T_x S^{d-1}$ with
its image as an affine subspace in $\mathbb{R}^d$.

Let $\dataset=\{x_i\}_{i=1}^N \subset S^{d-1}$ be a collection of points,
and let $a_i>0$ be a collection of postive weights.
For each scale $t>1$,
we define a kernel-based density estimator on the sphere by powers of the cosine similarity:
\begin{equation}
\label{eq:kerdens}
    f(x)=\sum_{i=1}^N a_i K_t(x,x_i),\ \ 
         K_t(x,y)= \begin{cases}
    (x\cdot y)^{t} & x\cdot y>0\\
    0 & \mbox{otherwise}
\end{cases}
\end{equation}
In connection with the scale parameter in the Gaussian kernel density estimator \eqref{eq:gausskde}, one might write $t=1/h^2$.
The condition that $t>1$ is important because it 
means that $f(x)$ is continuously differentiable.
 
We next define a 
cost function $c=c_t$ by
\begin{equation}
\label{eq:defcost}    
c(x,y)=\begin{cases}
-t\log(x\cdot y) & x\cdot y>0 \\
\infty & \mbox{otherwise}
\end{cases}
\end{equation}
which has also appears in the context of the embedding $S^{d-1}$ into $\mathbb{R}^d$ with given Gaussian curvature \cite{OLIKER2007600}.
Notice that $c(x,y)$ does not satisfy the triangle inequality since we may have $c(x,y)=\infty$,
but $c(x,z),c(y,z)<\infty$.

The cost function has the property that 
the gradient determines a bijection
\begin{equation}
\label{eq:costbij}    
\hemistrict(x)\leftrightarrow T_x S^{d-1},\ \ 
y\mapsto \grad_x c(x,y),
\end{equation}
so that $c(x,y)$ satisfies the twist property.
We have next the formula for the gradient
\begin{equation}
    \grad \psi(x)=tf(x)^{-1}\sum_i K_t(x,x_i)\phi_x(x_i).
\end{equation}
It follows easily that the transport map 
$T:\supp(f)\rightarrow \thesphere$ defined by
\begin{equation}
\label{eq:deftrans}    
T(x)=\rho\left(f(x)^{-1}
\sum_{i} K_t(x,x_i) \phi_x(x_i)\right)
\end{equation}
is the unique one satisfying \eqref{eq:transgrad}.
Notice that the inner expression is a convex combination of
the points $\phi_x(x_i)$.

We can now state our main result:

\begin{thm}
\label{thm:cconv}
Fix $t>1$, and let $f(x)$ and $c(x,y)$
be as in \eqref{eq:kerdens} and \eqref{eq:defcost}.
Define $\psi:\domx\rightarrow \mathbb{R}$ by $\psi(x)=\log(f(x))$, and let $T:\domx\rightarrow \domy$ be as in 
\eqref{eq:deftrans}, where 
$\domx=\supp(f) \subset S^{d-1}$, and
$\domy=S^{d-1}$.
Then we have
\begin{enumerate}
    \item \label{item:cconv} $\psi$ is $c$-convex, and its subdifferential 
    is given by
\[\partial_\psi=\{(x,T(x)):x\in \domx\}\] 
The image of $T$ the set of $y\in S^{d-1}$ for which $\psi^c(y)\neq -\infty$.
\item \label{item:homotopy} 
For each $a\in \mathbb{R}$, the inclusion map $\domy(a) \hookrightarrow \domx(a)$ 
induces a homotopy equivalence, 
where $\domx(a)=\{\psi\geq a\}$,
$\domy(a)=\{\psi^c \geq a\}$ are the super-level sets. The inverse homotopy is represented by the restriction $T:\domx(a)\rightarrow \domy(a)$.
\item \label{item:alphacurse} If $\dataset \subset S(V)$ where $V\subset \mathbb{R}^d$ is a subspace, then 
$\psi^c$ is the extension by $-\infty$ of the map $\bar{\psi}^c:S(V)\rightarrow \mathbb{R} \cup \{-\infty\}$ corresponding to
$\bar{f}=f|_{S(V)}$.
\end{enumerate}
\end{thm}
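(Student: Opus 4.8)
Let me think about how to prove this theorem. It has three parts.

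**Part 1 ($c$-convexity and subdifferential).** We need to show $\psi(x) = \log f(x)$ is $c$-convex with subdifferential $\{(x,T(x))\}$. The key formula is $\psi^c(T(x)) = \psi(x) + c(x,T(x))$, i.e., the supremum defining $\psi^{cc}$ should be attained at $y=T(x)$.

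Strategy: First I'd establish the pointwise inequality $\psi(x) \leq \zeta(y) - c(x,y)$ for an appropriate candidate $\zeta = \psi^c$, and then show equality holds at $y = T(x)$. The natural candidate for $\psi^c(y)$ is $\inf_x (\psi(x) + c(x,y))$. To show the infimum is attained and to compute it, use the twist condition: the gradient bijection \eqref{eq:costbij} means $\grad_x c(x,y)$ hits every tangent vector, so the critical point equation $\grad\psi(x) + \grad_x c(x,y) = 0$ has solutions.

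The cleanest route: reduce to the circle $S^1$ by restriction to 2-planes. For $x, y \in S^{d-1}$ with $x\cdot y > 0$, consider the great circle through $x$ and $y$ — but this won't capture $f$, which depends on all the $x_i$. So instead I'd verify the inequality directly. The substance: one shows $\log f(x) + c(x,y) \geq \log f(T^{-1}(y)) + c(T^{-1}(y), y)$ for all $x$ where $T^{-1}(y)$ is defined, which is a convexity statement. Concretely, using $\phi_x$ to pass to the tangent space, the function $x \mapsto \log f(x) - t\log(x\cdot y)$ should be shown convex along geodesics / have a unique minimum. I expect this is where the real work lies — establishing the right convexity to guarantee the infimum defining $\psi^c$ is attained at exactly one point and that $\psi^{cc} = \psi$.

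**Part 2 (homotopy equivalence).** Following the strategy sketched in the introduction: $T$ restricted to $\domx(a)$ lands in $\domy(a)$ by the inequality $\psi^c(T(x)) = \psi(x) + c(x,T(x)) \geq \psi(x) \geq a$ (since $c \geq 0$ where finite — note $x\cdot y \leq 1$ so $-t\log(x\cdot y) \geq 0$). For the inverse, we do not use $T$ directly; instead, we build an explicit deformation retraction. The idea: $T^{-1}(\domy(a)) \twoheadrightarrow \domy(a)$ is a homotopy equivalence (here one checks the fibers are contractible, or that there is a section; injectivity when $\supp f = S^{d-1}$ comes from the twist condition), and then $T^{-1}(\domy(a))$ deformation retracts onto $\domx(a)$. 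The retraction moves a point $x$ with $T(x) \in \domy(a)$ toward the region $\{\psi \geq a\}$ along a path where $\psi$ is monotone — the formula $\psi^c(T(x)) = \psi(x) + c(x,T(x))$ controls how $\psi$ and $c$ trade off along such a path. I'd interpolate: for $x$ with $\psi(x) < a \leq \psi^c(T(x))$, move $x$ toward (a preimage point of) $T(x)$; along the way $c(x,T(x))$ decreases to $0$ and $\psi$ correspondingly increases to meet $a$. The main obstacle here is showing this deformation is well-defined and continuous on all of $T^{-1}(\domy(a))$ simultaneously, and that it fixes $\domx(a)$ pointwise — essentially a careful version of the argument in \cite{carlsson2024kernel}.

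**Part 3 (compatibility with subspaces).** This should be the easiest. Suppose $\dataset \subset S(V)$. The claim is $\psi^c(y) = -\infty$ for $y \notin S(V)$, and $\psi^c(y) = \bar\psi^c(y)$ for $y \in S(V)$. The plan: decompose $y = y_V + y_\perp$ with $y_V \in V$, $y_\perp \in V^\perp$. For $x \in \domx \subset S(V)$ we have $x \cdot y = x \cdot y_V$, so $c(x,y)$ depends on $y$ only through $y_V$, and $c(x,y) = c(x, \rho(y_V)) - t\log\lVert y_V\rVert$ when $y_V \neq 0$. Then $\psi^c(y) = \inf_x(\psi(x) + c(x,y)) = \bar\psi^c(\rho(y_V)) - t\log\lVert y_V\rVert$. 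Since $\lVert y_V \rVert \leq 1$ with equality iff $y \in S(V)$, the term $-t\log\lVert y_V\rVert \geq 0$, and it is $+\infty$ when $y_V = 0$. But wait — that would make $\psi^c(y) = +\infty$, not $-\infty$, for $y \perp V$; however $\psi^c$ takes values in $\mathbb{R}\cup\{-\infty\}$, so I should recheck the sign convention. Re-examining: $c(x,y) = -t\log(x\cdot y)$ when $x \cdot y > 0$, else $\infty$; so if $y \perp V$ then $x\cdot y = 0$ for all $x \in \dataset$, hence $x \cdot y \leq 0$, giving $c(x,y) = \infty$, so $\psi(x) + c(x,y) = \infty$ and the infimum over a nonempty set of $+\infty$'s is $+\infty$ — but $\psi^c$ should be $-\infty$ there. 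The resolution: $\psi^c$ as used here is really $\psi^{(cc)c}$-style, or the image characterization from Part 1 says $\psi^c(y) \neq -\infty$ exactly on $\im(T)$, and $\im(T) \subseteq S(V)$ since $T(x) = \rho(\text{convex combination of } \phi_x(x_i))$ and all $x_i, x \in S(V)$ force the combination into $V$. So the clean argument is: $\im(T) \subseteq S(V)$ by the explicit formula \eqref{eq:deftrans}, hence by Part 1, $\psi^c(y) = -\infty$ off $S(V)$; and on $S(V)$, the defining infimum for $\psi^c$ ranges over the same $x \in S(V)$ with the same integrand as for $\bar\psi^c$, giving $\psi^c|_{S(V)} = \bar\psi^c$. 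The only subtlety is confirming that restricting the ambient sphere does not change which $x$ achieve the infimum — but since $\domx = \supp f \subset S(V)$ regardless, the infimum is literally the same expression. So Part 3 reduces to the geometric observation about $\im(T)$ plus Part 1.

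Overall, I expect Part 1 — specifically proving the infimum defining $\psi^c$ is attained and that $\psi^{cc} = \psi$, which amounts to a geodesic-convexity property of $x \mapsto \log f(x) - t\log(x\cdot y)$ — to be the main obstacle, with the circle reduction being the tool that makes it tractable.
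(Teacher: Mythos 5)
Your high-level architecture — prove the key identity $\psi^c(T(x))=\psi(x)+c(x,T(x))$, then for the homotopy part factor through $T^{-1}(\domy(a))$ using contractibility of fibers plus a deformation retraction onto $\domx(a)$ — matches the paper's. But there is a genuine gap that undermines both your Part 1 and Part 3, and it stems from not having the paper's key technical lemma.

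The missing ingredient is the restriction-compatibility fact (Lemma~\ref{lem:rescosim} in the paper): if $V\subset\R^d$ is any subspace, then $f|_{S(V)}$ is \emph{again} a cosine-similarity kernel density of the same form, with data points $\bar{x}_i=\rho(\proj_V x_i)$ and weights $\bar{a}_i=\lVert\proj_V x_i\rVert^t a_i$; moreover the restricted transport map is $\bar{T}(x)=\rho(\proj_V T(x))$. This is exactly what you dismiss when you write ``consider the great circle through $x$ and $y$ — but this won't capture $f$, which depends on all the $x_i$.'' On the contrary, it \emph{does} capture $f$ in the relevant sense: the restriction is of the same algebraic type, and in particular the one-dimensional function obtained on the great circle through any $x,x^*$ is still a finite sum of kernels with shifted angles and rescaled weights. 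This is what makes the circle reduction the backbone of the proof. In one dimension one then proves $A_{uu}(u,\tilde T(u))\geq 0$ by a Cauchy--Schwarz argument (after rewriting $A_u, A_{uu}$ as variances of $\tan(u-u_i)$ over a probability measure), which yields the monotonicity of $\tilde T$ and the unique-minimizer property you correctly flagged as the ``main obstacle.'' Without the restriction lemma you are left gesturing at geodesic convexity in all dimensions, which is much harder to verify directly for a sum of kernels; indeed you never actually verify it.

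The same missing lemma produces a concrete error in your Part 3. You assert ``since $\domx=\supp f\subset S(V)$ regardless, the infimum is literally the same expression.'' This is false: $\supp(f)=\bigcup_i \hemistrict(x_i)$ is an open union of hemispheres and is never contained in the proper subsphere $S(V)$. What actually makes the two infima agree is the identity (a special case of Lemma~\ref{lem:rescosim}) that for $y\in S(V)$ and any $x\in\supp(f)$ with $x\cdot y>0$, writing $x_V=\proj_V x$ one has $\psi(x)+c(x,y)=\bar\psi(\rho(x_V))+c(\rho(x_V),y)$: the objective depends on $x$ only through $\rho(x_V)\in S(V)$, so the infimum over $\supp(f)$ collapses to the infimum over $\supp(\bar f)$. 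Your conclusion about $\psi^c\equiv-\infty$ off $S(V)$, via $\im(T)\subset S(V)$ and Part~1, is correct. Your Part~2 sketch is consistent with the paper's, though the paper pins down the retraction as moving $x$ to its closest point on the cost-ball $B_c(T(x);\psi^c(T(x))-a)$, which is what makes it visibly continuous and fix $\domx(a)$ pointwise.
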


The $c$-convexity from item \ref{item:cconv} is illustrated in the one-dimensional case in Figure \ref{fig:cconv}. As mentioned in the introduction, the restricted map transport map $T:\domx(a)\rightarrow \domy(a)$ is not surjective in item \ref{item:homotopy}.
The argument we use involves finding a deformation retract of $T^{-1}(\domy(a))\rightarrow \domx(a)$, taking advantage of the explicit formula for 
$\psi^c(T(x))=\psi(x)+c(x,y)$ determined by the first item and \eqref{eq:subdiff}.
While item \ref{item:alphacurse}
may look like a predictable compatibility result,
it implies the crucial property that the covering number of $\domy(a)$ is unaffected by isometrically embedding $\dataset$ into higher dimensions, which is special to the choice of kernel function.
For instance, replacing the kernel function with 
a Gaussian based along the tangent direction would not have this property.

\subsection{Proof of Theorem \ref{thm:cconv}}

We begin with a sequence of lemmas.

\begin{lemma}
\label{lem:rescosim}
Let $V\subset \mathbb{R}^d$ be a subspace, and let $\bar{f}$ be the restriction of $f$ to $S(V)$.
Then
   \begin{enumerate}
    \item \label{item:reskde} We have that
\begin{equation}
\label{eq:reskde}
\bar{f}(x)=\sum_{i} \bar{a}_i 
K_t(x,\bar{x}_i),\ \  \bar{a}_i=\lVert x'_i \rVert^t a_i,\ \ \bar{x}_i=\rho(x'_i).
\end{equation}   
where $x'_i=\proj_V(x_i)$, and the sum is over $i$ for which 
$x'_i\neq 0$.
\item \label{item:restrans} The transport map 
$\bar{T}:\supp(\bar{f}) \rightarrow S(V)$ corresponding to $\bar{f}$
is given by
\begin{equation}
\label{eq:restrans}    
\bar{T}(x)=\rho(\proj_V(T(x)))
\end{equation}
for all $x \in \supp(\bar{f})$.
\item \label{item:subtrans} 
If $\dataset\subset S(V)$,
then we have $\bar{T}=T|_V$, and
\begin{equation}
\label{eq:domtrans}
T(x)=T(\rho(\proj_V(x)))
\end{equation}
for all $x\in \supp(f)$.
\end{enumerate} 
\end{lemma}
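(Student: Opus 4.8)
The plan is to treat all three items as naturality statements: the construction interacts in an elementary way with orthogonal projection onto $V$ and with multiplication by positive scalars, and the normalization map $\rho$ forgets the latter. So in each case I would track how the dot products $x\cdot x_i$, the kernel $K_t$, the maps $\phi_x$, and finally the barycentric vector appearing inside $\rho$ in \eqref{eq:deftrans} transform, and then read off the conclusion. Write $x'_i=\proj_V(x_i)$, and work at points where the relevant density is positive so that $T$, resp.\ $\bar T$, is defined (extending by continuity at the boundary). For item \ref{item:reskde} I would argue directly: if $x\in S(V)$ then $x\cdot x_i=x\cdot\proj_V(x_i)=x\cdot x'_i$ since $x$ is orthogonal to $V^\perp$; if $x'_i=0$ this vanishes and the $i$-th term drops from $\bar f$, and otherwise $x\cdot x'_i=\lVert x'_i\rVert\,(x\cdot\bar x_i)$ with $\bar x_i=\rho(x'_i)$, whence $K_t(x,x_i)=\lVert x'_i\rVert^{\,t}K_t(x,\bar x_i)$ and $a_iK_t(x,x_i)=\bar a_iK_t(x,\bar x_i)$. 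Summing over $i$ gives \eqref{eq:reskde}.

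For item \ref{item:restrans} the one computation to record is that, for $x\in S(V)$ and $x'_i\neq 0$,
\[
\phi_x(\bar x_i)=\frac{\bar x_i}{\bar x_i\cdot x}=\frac{x'_i/\lVert x'_i\rVert}{(x'_i\cdot x)/\lVert x'_i\rVert}=\frac{\proj_V(x_i)}{x_i\cdot x}=\proj_V\!\big(\phi_x(x_i)\big),
\]
using $x_i\cdot x=x'_i\cdot x$. Since $\bar T$ is by definition the instance of \eqref{eq:deftrans} for the data $(\bar a_i,\bar x_i)$ formed intrinsically in $V$, combining this with item \ref{item:reskde} shows that the vector inside $\rho$ defining $\bar T(x)$ is $\bar f(x)^{-1}\sum_i\bar a_iK_t(x,\bar x_i)\phi_x(\bar x_i)=f(x)^{-1}\sum_i a_iK_t(x,x_i)\proj_V(\phi_x(x_i))=\proj_V(w)$, where $w$ is the vector inside $\rho$ defining $T(x)$. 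As $w$ is a convex combination of points $\phi_x(x_i)$ with $\phi_x(x_i)\cdot x=1$, we get $w\cdot x=1$, hence $\proj_V(w)\cdot x=w\cdot x=1\neq 0$ (here $x\in V$), so $\proj_V(w)\neq 0$; writing $w=\lVert w\rVert\,T(x)$ then yields $\bar T(x)=\rho(\proj_V(w))=\rho(\proj_V(T(x)))$, which is \eqref{eq:restrans}.

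For item \ref{item:subtrans}, if $\dataset\subset S(V)$ then $x'_i=x_i$, so $\bar a_i=a_i$ and $\bar x_i=x_i$; since \eqref{eq:deftrans} involves only dot products, the maps $\phi_x$, and $\rho$ — all intrinsic to $V$ once $x,x_i\in V$ — the formula computed in $V$ coincides with the one in $\mathbb{R}^d$, giving $\bar T=T|_{S(V)}$. For the remaining identity I would fix $x\in\supp(f)$ and set $x^\ast=\rho(\proj_V(x))$, which is well defined because $f(x)>0$ forces some $x\cdot x_i=\proj_V(x)\cdot x_i>0$, so $\proj_V(x)\neq 0$; write $x=\cos\theta\,x^\ast+\sin\theta\,v$ with $v\in S(V^\perp)$ and $\cos\theta=\lVert\proj_V(x)\rVert>0$. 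Then $x\cdot x_i=\cos\theta\,(x^\ast\cdot x_i)$, so $K_t(x,x_i)=(\cos\theta)^{t}K_t(x^\ast,x_i)$, $\phi_x(x_i)=(\cos\theta)^{-1}\phi_{x^\ast}(x_i)$, and $f(x)=(\cos\theta)^{t}f(x^\ast)$; substituting into \eqref{eq:deftrans}, the powers of $\cos\theta$ cancel and the vector inside $\rho$ becomes $(\cos\theta)^{-1}$ times the corresponding vector for $x^\ast$, so $T(x)=T(x^\ast)$ since $\rho$ ignores positive scalars, which is \eqref{eq:domtrans}.

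I do not expect a conceptual obstacle; the work is entirely bookkeeping — tracking which summands survive the restriction (the condition $x'_i\neq 0$), checking that the projected barycentric vectors are nonzero so that $\rho$ may be applied, and being disciplined about the positive scalars that $\rho$ discards. The one point worth stating carefully is that ``the transport map corresponding to $\bar f$'' means the instance of \eqref{eq:deftrans} formed in the ambient inner product space $V$, not in $\mathbb{R}^d$, which is precisely the compatibility being established.
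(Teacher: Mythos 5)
Your proof is correct and takes the same approach as the paper, which simply remarks that all three items ``are straightforward to deduce from \eqref{eq:deftrans}''; you have supplied exactly the bookkeeping that remark elides, including the useful checks that $\phi_x(\bar x_i)=\proj_V(\phi_x(x_i))$ and that $\proj_V(w)\neq 0$ so $\rho$ applies. (You also quietly include the weights $a_i$ inside the sum in \eqref{eq:deftrans}, which the paper drops — evidently a typo there, since otherwise the expression would not be a convex combination as claimed.)
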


\begin{figure}[th]
    \centering
    \includegraphics[width=1\linewidth]{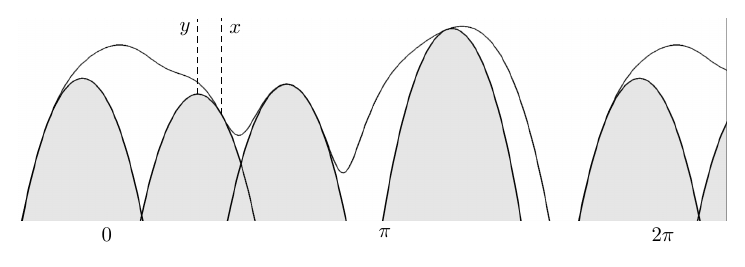}
    \caption{Illustration of the $c$-convexity statement in the one-dimensional case, drawn in angular coordinates.
    The top curve is
    $\psi(x)=\log(f(x))$, where $\dataset$ consists of 10 random points on $S^1$, and $t=1/h^2$ for $h=.3$. 
    The functions which are the upper boundary of the gray regions are given by $\psi^c(y)-c(\_,y)$, whose peak occurs at the value
    $y=T(x)$, where $x$ is the point of contact with $\psi(x)$. The $c$-convexity implies that those functions never exceed the height of $\psi$.}
    \label{fig:cconv}
\end{figure}

\begin{proof}
These are straightforward to deduce from 
\eqref{eq:deftrans}.
\end{proof}

In particular, this allows us to reduce several statements to the one-dimensional case when $f$ is a function on the circle.
Let us denote 
$\gamma(u)=(\cos(u),\sin(u))$,
and identify the hemispheres
with open intervals
$\hemistrict(\gamma(v))=\gamma(I_{v,\pi/2})$
where $I_{v,c}=(v-c,v+c)$.
\begin{lemma}
\label{lem:onedim}    
In the one-dimensional case, 
$f: S^1\rightarrow \mathbb{R}_{+}$, we have
\begin{enumerate}
\item \label{item:halfcirc} The support of $f$ is the union of open half-circles 
$\hemistrict(y)$ as $y$ ranges over
the image of $T$.
\item \label{item:transtilde} 
We have a unique lifting to a continuous 
 $\tilde{T}:\gamma^{-1}(\supp(f))\rightarrow \mathbb{R}$
satisfying $\gamma(\tilde{T}(u))=T(\gamma(u))$, and $|\tilde{T}(u)-u|< \pi/2$ for $u\in I_{v,\pi/2}$.
    \item \label{item:monotone}      
The lifting $\tilde{T}$ is weakly increasing,
and is constant on a neighborhood $I_{v,\epsilon}$
precisely when there is a unique data point
$x_i\in \gamma(I_{v,\pi/2+\epsilon})$,
in which case
its value is the unique preimage
$u_i=\gamma^{-1}(x_i)$ in $I_{u,\pi/2}$.
    \item \label{item:uniquemax} Whenever $y^*=T(x^*)$, the function
$\psi(x)+c(x,y^*)$ achieves its minimum value at $x=x^*$.
\end{enumerate}
\end{lemma}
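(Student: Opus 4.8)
The plan is to pass to angular coordinates $x=\gamma(u)$, $x_i=\gamma(u_i)$ and extract an explicit formula for the lift. Since $\phi_{\gamma(u)}(\gamma(u_i))=\gamma(u)+\tan(u_i-u)\,\gamma'(u)$ sits at tangent-coordinate $\tan(u_i-u)$, formula \eqref{eq:deftrans} becomes
\[
\tilde T(u)=u+\arctan S(u),\qquad S(u)=\sum_i w_i(u)\tan(u_i-u),\qquad w_i(u)=\frac{a_iK_t(\gamma(u),x_i)}{f(\gamma(u))},
\]
where the sum runs over the indices \emph{active at $u$}, i.e.\ with $|u-u_i|<\pi/2$, and the $w_i(u)$ are positive weights summing to $1$; because $t>1$ each $\cos(u-u_i)_+^{t}$ is $C^1$, so $f\circ\gamma$, $S$ and $\tilde T$ are $C^1$ on $\gamma^{-1}(\supp f)$. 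This gives item \ref{item:transtilde} at once: $\tilde T$ is continuous, $|\tilde T(u)-u|=|\arctan S(u)|<\pi/2$, and $\gamma(\tilde T(u))=T(\gamma(u))$ by construction; uniqueness holds because two continuous lifts obeying this bound differ by a continuous $2\pi\Z$-valued function of modulus $<\pi$, hence by $0$.

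For item \ref{item:monotone} I would first prove weak monotonicity by differentiating. The identity $\frac{d}{du}\log f(\gamma(u))=tS(u)$ (immediate from $\frac{d}{du}\cos(u-u_i)^{t}=-t\cos(u-u_i)^{t-1}\sin(u-u_i)$) yields $w_i'=t\,w_i\big(\tan(u_i-u)-S\big)$, and together with $\frac{d}{du}\tan(u_i-u)=-(1+\tan^2(u_i-u))$ everything collapses to
\[
\tilde T'(u)=1+\frac{S'(u)}{1+S(u)^2}=\frac{(t-1)\,\mathrm{Var}_w\!\big(\tan(u_i-u)\big)}{1+S(u)^2}\;\geq\;0,
\]
the numerator being the $w$-weighted variance of $\{\tan(u_i-u)\}$ over the active indices. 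Thus $\tilde T$ is weakly increasing on each component of $\gamma^{-1}(\supp f)$ (monotonicity across gaps will come from the endpoint values computed for item \ref{item:halfcirc}). Moreover $\tilde T'(u)=0$ exactly when this variance vanishes, i.e.\ when all active data points share one angle $u_i$, in which case $f(\gamma(u))=a_i\cos(u-u_i)^{t}$ and $T(\gamma(u))=\rho(x_i)=x_i$, so $\tilde T\equiv u_i$. To reach the ``precisely when'', I would argue that on a subinterval of $\gamma^{-1}(\supp f)$ the set of active angles cannot change --- a change would force two active data points to be antipodal at the transition point, where $f$ vanishes and which therefore is not in $\gamma^{-1}(\supp f)$ --- and then convert ``the active set equals $\{i\}$ throughout $I_{v,\eps}$'' into ``$x_i$ is the only data point in $\gamma(I_{v,\pi/2+\eps})$'' using $\bigcup_{u\in I_{v,\eps}}I_{u,\pi/2}=I_{v,\pi/2+\eps}$.

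Item \ref{item:uniquemax} is the analytic core, and the idea is a change of variables that convexifies the objective. Put $v^*=\tilde T(u^*)$, $x^*=\gamma(u^*)$, $y^*=\gamma(v^*)=T(x^*)$. For $x\in\domx$ the function $\psi(x)+c(x,y^*)$ equals $\log\!\big(\sum_i a_i(\cos(u-u_i)_+/\cos(u-v^*))^{t}\big)$ when $x\in\hemistrict(y^*)$ and is $+\infty$ otherwise; the identity $\cos(u-u_i)=\cos(u-v^*)\cos(u_i-v^*)+\sin(u-v^*)\sin(u_i-v^*)$ shows that under the increasing bijection $s=\tan(u-v^*):I_{v^*,\pi/2}\to\R$ the argument of the logarithm becomes
\[
h(s)=\sum_i a_i\big(\cos(u_i-v^*)+s\,\sin(u_i-v^*)\big)_+^{t},
\]
a sum of maps of the form $a_i\cdot(\text{affine in }s)_+^{t}$ with $t\geq1$, hence convex in $s$ on all of $\R$. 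Since $\log$ is increasing, minimizing $\psi(x)+c(x,y^*)$ over $\domx$ amounts to minimizing $h$; and the defining relation $\grad\psi(x^*)+\grad_x c(x^*,y^*)=0$ of $T$ (that is, \eqref{eq:transgrad}) makes $u^*$ a critical point of $u\mapsto\log h(\tan(u-v^*))$, hence $s^*=\tan(u^*-v^*)$ a critical point of the $C^1$ convex function $h$ --- and a critical point of a convex function is a global minimum. So the minimum is attained at $x=x^*$.

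Finally, item \ref{item:halfcirc} follows by describing $\im(T)$. On a component $(\alpha,\beta)$ of $\gamma^{-1}(\supp f)$, as $u\to\alpha^+$ the active data points are exactly those lying at angle $\alpha+\pi/2$ (at least one exists, or $\supp f$ would not begin at $\alpha$), so $\tilde T\equiv\alpha+\pi/2$ near $\alpha$, and symmetrically $\tilde T\equiv\beta-\pi/2$ near $\beta$; being continuous and weakly increasing, $\tilde T$ maps $(\alpha,\beta)$ onto $[\alpha+\pi/2,\beta-\pi/2]$ (so $\beta-\alpha\geq\pi$), and these endpoint values also provide the monotonicity across gaps used in item \ref{item:monotone}. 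Hence $\im(T)$ is the union of the arcs $\gamma([\alpha+\pi/2,\beta-\pi/2])$, and using $\bigcup_{v\in[p,q]}(v-\pi/2,v+\pi/2)=(p-\pi/2,q+\pi/2)$,
\[
\bigcup_{y\in\im(T)}\hemistrict(y)=\bigcup\gamma\big((\alpha,\beta)\big)=\supp f .
\]
The case $\supp f=S^1$ is separate: there $\tilde T:\R\to\R$ is continuous, weakly increasing, and commutes with $+2\pi$, hence surjective, so $\im(T)=S^1=\supp f$. The fiddliest part of the whole argument is precisely this last item: pinning down $\tilde T$ near the endpoints of a support component and carrying out the $\bmod\ 2\pi$ bookkeeping (including isolating the full-support case), whereas the monotonicity identity of item \ref{item:monotone} and the convexification of item \ref{item:uniquemax} are short once the right substitution is in hand.
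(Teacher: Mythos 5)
Your proof is correct and, for two of the four items, takes a route that is genuinely cleaner than the paper's. For item~\ref{item:monotone} you compute $\tilde T'(u)$ directly and obtain the closed form
$\tilde T'(u)=(t-1)\,\mathrm{Var}_w(\tan(u_i-u))/(1+S(u)^2)\geq 0$; the paper instead establishes $A_{uu}(u,\tilde T(u))\geq 0$ via Cauchy–Schwarz and then deduces $\tilde T'\geq 0$ from implicit differentiation of $A_u(u,\tilde T(u))\equiv 0$ using $A_{uv}<0$. The two are equivalent, but your formula is more informative (it identifies $\tilde T'$ exactly, not just its sign). For item~\ref{item:uniquemax} your change of variables $s=\tan(u-v^*)$, which turns $e^{\psi+c}$ into the manifestly convex $h(s)=\sum_i a_i(\cos(u_i-v^*)+s\sin(u_i-v^*))_+^{t}$, is a genuinely different and shorter argument: a $C^1$ convex function with a vanishing derivative is globally minimized there, and it even yields item~\ref{item:halfcirc} as a byproduct ($h$ convex with an interior critical value $>0$ forces $h>0$ on all of $\R$, hence $\hemistrict(y^*)\subset\supp f$). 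The paper instead argues by contradiction, using the monotonicity of $\tilde T$ together with the ``unique data point'' characterization from item~\ref{item:monotone} to conclude $A(\cdot,v^*)$ would have to be constant. Your item~\ref{item:transtilde} matches the paper's formula \eqref{eq:transtildeang}, and item~\ref{item:halfcirc} is handled a bit differently (explicit endpoint limits of $\tilde T$ on components of $\gamma^{-1}(\supp f)$ rather than the paper's convex-hull-of-angles remark), but both are fine.

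One small caveat: in item~\ref{item:monotone} the parenthetical claim that ``a change [of the active set] would force two active data points to be antipodal'' is not right in general --- the active set can gain an index at $u_j-\pi/2$ without any antipodal pair. That is harmless because the variance-vanishing condition already rules out having two distinct active angles on any open subinterval, which is the argument you in fact need; but the sentence as written suggests a mechanism that doesn't cover the ``active set grows'' case. I'd simply delete the antipodality remark and rely on the variance argument throughout.
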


\begin{proof}
 
For item \ref{item:halfcirc}, let $y=T(x)$
for $x\in \supp(f)$. Then identifying $\hemistrict(x)$ with the interval $I_{u,\pi/2}$, and let $\{u_1,...,u_k\} \in I_{u,\pi/2}$ be the images of $\dataset \cap \hemistrict(y)$ under $\gamma^{-1}$, which must be nonempty. By the form of $T$, we see that $v$ must be in the convex hull of the $u_i$, showing that $\hemistrict(y)\subset \supp(f)$.
On the other hand, every $x\in \hemistrict(y)$, so we have covered the whole support.

The existence of the lifting in item \ref{item:transtilde} is also clear. Explicitly, the angular version of $f(x)$ is given by
\[\tilde{f}(u)=
\sum_i a_i \max(\cos(u-u_i),0)^t\]
from which we obtain
\begin{equation}
\label{eq:transtildeang}    
\tilde{T}(u)=u+
\tan^{-1} \left(\tilde{f}(u)^{-1}\sum_i a_i \max(\cos(u-u_i),0)^t \tan(u_i-u)\right).
\end{equation}

For the last two parts, we can write
the function $\psi(x)+c(x,y)$
in angular coordinates as
\[A(u,v)=\log\left(\sum_{i} a_i \max(\cos(u-u_i),0)^t\right)-t\log(\cos(u-v)),\] 
which is defined on the domain $\{(u,v) \in \supp(\tilde{f})\times \im(\tilde{T}):
u\in I_{v,\pi/2}\}$. We begin by calculating its partial derivatives.

The first partial derivative is given by
\[A_u(u,v)=-t\tilde{f}(u)^{-1} \sum_i a_i \tan(u-u_i) 
\max(\cos(u-u_i),0)^{t}+t\tan(u-v),\]
which is continuous since $t>1$.
It is convenient to write the first term as an expectation
\begin{equation} 
\label{eq:auexpect}
A_u(u,v)=-t\langle \tan(u-u_i) \rangle+t\tan(u-v)
\end{equation}
where $\langle \tan(u-u_i) \rangle$ is the expectation of the random variable $\tan(u-u_i)$
over the normalized finite probability measure
$\tilde{K}_t(u,u_i)/\tilde{f}(u)$ on $\{u_i\}$.

The second partial derivative is given by 
\[A_{uu}(u,v)=t(t-1)\tilde{f}(u)^{-1}\sum_{i} a_i \tan^2(u-u_i)
\max(\cos(u-u_i),0)^{t}-\]
\[t^2(\tilde{f}(u)^{-1}\sum_i a_i \tan(u-u_i) 
\max(\cos(u-u_i),0)^t)^2+\]
\[-t+t(1+\tan^2(u-v)),\]
which is valid when $u$ is in the complement of $\{u_i\pm \pi/2+2k\pi\}$, since the first term has a singularity at these points if $t\leq 2$.
Rewriting this expression using expectations as above, we obtain
\[t(t-1)\langle \tan(u-u_i)^2\rangle-t^2\langle \tan(u-u_i)\rangle^2+t\tan^2(u-v)=\]
\[t(t-1) \left( \langle \tan(u-u_i)^2\rangle-
\langle \tan(u-u_i)\rangle^2\right)+\]
\[A_u(u,v) (\langle \tan(u-u_i)\rangle+\tan(u-v)).\]
using \eqref{eq:auexpect}.
We then find that $A_{uu}(u,v)\geq 0$ when 
$v=\tilde{T}(u)$, since the first expression is
non-negative for $t>1$ by Cauchy-Schwarz, and $A_u(u,\tilde{T}(u))=0$.

To check that $\tilde{T}$ is increasing, we have
\[\frac{\partial}{\partial u} A_u(u,\tilde{T}(u))=
A_{uu}(u,\tilde{T}(u))+A_{uv}(u,\tilde{T}(u))\tilde{T}'(u).\]
We have already established that $A_u(u,\tilde{T}(u))$
is identically zero, and
by the above paragraph, we have that 
$A_{uu}(u,\tilde{T}(u))\geq 0$ where it is defined. Next, we have that
$A_{uv}(u,v)=-t\sec^2(u-v)$ which is negative
for $v=\tilde{T}(u)\in (u-\pi/2,u+\pi/2)$, so that 
$\tilde{T}'(u)$ is defined and nonnegative whenever $u$ is in the complement of
$\{u_i\pm \pi/2+2k\pi\}$. Since 
the complement is dense and
$\tilde{T}$ is continuous, it must be 
weakly increasing on its domain. The second part of item \ref{item:monotone} follows since we have positivity in Cauchy-Schwarz when there is more than one distinct point in range.

For the final statement, suppose that
$y^*=T(x^*)$, and that 
\[\psi(x)+c(x,y^*) <\psi(x^*)+c(x^*,y^*)\]
for some $x \in \hemistrict(y^*)$.
We may assume that $x$ is a
global minimizer, so that 
$\grad \psi(x)+\grad_x c(x,y^*)=0$, and therefore $T(x)=y^*$.
Since $x^*,x\in \hemistrict(y^*)$,
we may take $I_{v^*,\pi/2}$ to be the corresponding interval with $y^*=\gamma(v^*)$, and let $u,u^*$ be the points in the interval corresponding to $x,x^*$. Then $\tilde{T}$ is defined on 
the whole interval by the item \ref{item:halfcirc},
it is increasing by item \ref{item:monotone}, and we have $T(u)=T(u^*)=v^*$.
It must therefore be constant and equal to $v^*$ between $u$ and $u^*$, from which it follows that $A(\_ ,v^*)$ is constant on that interval by the second part of item \ref{item:monotone}, a contradiction.

\end{proof}

\begin{remark}
    The statements that $A_u(u,T(u))=0$ and $A_{uu}(u,T(u))\geq 0$ from the proof below also appear in the differential criteria for $c$-convexity, which is the same Theorem 12.46 of \cite{villani2008old}. The reason we cannot apply that theorem directly is that it assumes that $\psi$ is $C^2$, which would require that $t > 2$.
\end{remark}

\begin{lemma}
\label{lem:cconv}
We have that $\psi$ is $c$-convex, where $\psi(x)$ and $c(x,y)$ are as in Theorem \ref{thm:cconv}.
\end{lemma}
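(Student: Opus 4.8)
The plan is to establish $c$-convexity directly from Definition \ref{def:cconv} by producing the conjugate function explicitly: I would take $\zeta=\psi^c$ (the $c$-transform \eqref{eq:ctransform}) and show
\[
\psi(x)=\sup_{y\in S^{d-1}}\bigl(\psi^c(y)-c(x,y)\bigr)=\psi^{cc}(x)\qquad\text{for all }x\in\domx,
\]
with $\psi^{cc}$ as in \eqref{eq:psicc}. The inequality $\psi^{cc}\le\psi$ is automatic (bound the inner infimum in \eqref{eq:psicc} by its value at $x'=x$), so the content is the reverse bound $\psi^{cc}(x^*)\ge\psi(x^*)$ for an arbitrary fixed $x^*\in\domx$. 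For this I would use the distinguished point $y^*=T(x^*)$, noting that $x^*\cdot y^*>0$: by \eqref{eq:deftrans}, $T(x^*)$ is the normalization of a convex combination of points of the affine hyperplane $\{v:v\cdot x^*=1\}$, so in particular $c(x^*,y^*)<\infty$. Feeding $y=y^*$ into the supremum in \eqref{eq:psicc}, it then suffices to prove
\begin{equation}
\psi(x')+c(x',y^*)\ \ge\ \psi(x^*)+c(x^*,y^*)\qquad\text{for all }x'\in\domx. \tag{$\star$}
\end{equation}

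To prove ($\star$) I would reduce to the circle and quote Lemma \ref{lem:onedim}(\ref{item:uniquemax}). If $x'\cdot y^*\le 0$ then $c(x',y^*)=\infty$ and ($\star$) is trivial, and if $x'=x^*$ it is an equality; otherwise $x'$ and $x^*$ both have strictly positive inner product with $y^*$, so $x'\ne -x^*$ and $W:=\spn(x^*,x')$ is genuinely $2$-dimensional with $S(W)\cong S^1$. By Lemma \ref{lem:rescosim}(\ref{item:reskde}) the restriction $\bar f=f|_{S(W)}$ is a spherical kernel density estimator on $S(W)$ whose logarithm agrees with $\psi$ there, and by Lemma \ref{lem:rescosim}(\ref{item:restrans}) its transport map satisfies $\bar T(x^*)=\rho(\proj_W(y^*))=:y'$ — here $\proj_W(y^*)\ne 0$ because $x'\cdot\proj_W(y^*)=x'\cdot y^*>0$, and $x^*\in\supp(\bar f)$ because $\bar f(x^*)=f(x^*)>0$. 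Writing $\lambda=\lVert\proj_W(y^*)\rVert>0$, every $z\in S(W)$ satisfies $z\cdot y^*=\lambda(z\cdot y')$, hence $c(z,y^*)=c(z,y')-t\log\lambda$ (with both sides $+\infty$ when $z\cdot y'\le 0$), the cost on the right being that of the circle $S(W)$. Thus $\psi(z)+c(z,y^*)=\log\bar f(z)+c(z,y')-t\log\lambda$ on $S(W)$, and since $y'=\bar T(x^*)$, Lemma \ref{lem:onedim}(\ref{item:uniquemax}) applied to $\bar f$ says precisely that $z\mapsto\log\bar f(z)+c(z,y')$ is minimized at $z=x^*$. Evaluating at $z=x'$ and subtracting the constant $t\log\lambda$ gives ($\star$).

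I expect the main obstacle to be bookkeeping rather than a genuinely new idea: one must make sure the one-dimensional Lemma \ref{lem:onedim}(\ref{item:uniquemax}) is legitimately applicable after restriction — that $x^*$ lies in $\supp(\bar f)$, that the orthogonal projections involved do not vanish so that $\bar T(x^*)$ and $y'$ are well defined, that $W$ is truly $2$-dimensional, and that the rescaling $c(z,y^*)=c(z,y')-t\log\lambda$ really is the cost function of $S(W)$ — all of which follow from the positivity of the various inner products above. It is worth emphasizing that Lemma \ref{lem:rescosim} holds for an \emph{arbitrary} subspace, so no hypothesis of the form $\dataset\subset S(W)$ is used at this stage; that stronger assumption enters only in item \ref{item:alphacurse} of Theorem \ref{thm:cconv}.
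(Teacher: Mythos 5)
Your proposal is correct and follows essentially the same route as the paper: fix $y^*=T(x^*)$, show $\psi(x')+c(x',y^*)\ge\psi(x^*)+c(x^*,y^*)$ by passing to the two-dimensional subspace $W=\spn(x^*,x')$, observe that replacing $y^*$ by $\rho(\proj_W(y^*))=\bar T(x^*)$ only shifts both sides by the constant $-t\log\lVert\proj_W(y^*)\rVert$, and then invoke Lemma \ref{lem:onedim}(\ref{item:uniquemax}) for the circle; the conclusion $\psi^{cc}=\psi$ then drops out as you describe. The only difference is cosmetic: the paper phrases the reduction as a contradiction and also records the reverse inclusion $\partial_c\psi\subset\{(x,T(x))\}$ (needed later for Theorem \ref{thm:cconv}, item \ref{item:cconv}, not for the $c$-convexity claim itself), whereas you prove the inequality directly and add the helpful bookkeeping checks (nonvanishing projections, genuine two-dimensionality of $W$) that the paper leaves implicit.
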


\begin{proof}

We first show that whenever $y^*=T(x^*)$, we have that 
\begin{equation}
\label{eq:cconvineq}
    \psi(x^*)+c(x^*,y^*)\leq 
\psi(x)+c(x,y^*).
\end{equation}
for all $x\in \hemistrict(y^*)$.
Suppose then that we have the reverse inequality in \eqref{eq:cconvineq} for some
$x\neq x^*$,
and let $V$ be the span of $x$ and $x^*$.
Then the inequality is also satisfied with $\rho(\proj_V(y^*))$ in place of $y^*$, since the substitution subtracts the same constant from both sides by the form of $c$. Since this projection is the same as $\bar{T}(x^*)$ by item \ref{item:restrans} of Lemma \ref{lem:rescosim},
we have reduced \eqref{eq:cconvineq} to 
the one-dimensional case, which follows from
item \ref{item:uniquemax} of Lemma \ref{lem:onedim}.

The $c$-convexity now follows from standard arguments related to the Monge problem in the differentiable case
(see the beginning of Chapter 10 of \cite{villani2008old}, as well as the proof of Theorem 12.46): 
first, it follows from the previous paragraph that $\{(x,T(x))\}\subset \partial_c\psi$. To check the reverse inclusion, suppose $(x,y)\in \partial_c\psi$. Since $\psi$ is in
$C^1$ for $t>1$, 
it can be deduced from taking the limit 
as $x\rightarrow x^*$ from different tangent directions 
in \eqref{eq:cconvineq} that 
$\grad \psi(x)+\grad_x c(x,y)=0$. Then 
since the map $y\mapsto \grad_x c(x,y)$ is a bijection $
\hemistrict(x) \leftrightarrow T_x S^{d-1}$, we have that
$y=T(x)$ is the unique value for which $(x,y)\in\partial_c\psi$.
To check the $c$-convexity, if the maximal value takes place at $x=x^*$, this just means that 
$\psi^c(y^*)=\psi(x^*)+c(x^*,y^*)$, from which it follows that $\psi^{cc}=\psi$, where
$\psi^{cc}$ is as in \eqref{eq:psicc}.
    
\end{proof}

We now have the following useful formula, which follows from the Lemma \ref{lem:cconv} and the definition of $\partial_c \psi$.
\begin{equation}
\label{eq:psicoft}    
\psi^c(T(x))=
\psi(x) + c(x,T(x)).
\end{equation}

\begin{lemma}
\label{lem:contractible}

The inclusion map $\iota:\domy(a)\hookrightarrow T^{-1}(\domy(a))$ induces a homotopy equivalence whose inverse is represented by the restriction of $T$.

\end{lemma}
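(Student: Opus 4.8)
The plan is to exhibit a geodesic homotopy $H$ from the identity of $\domx$ to $T$ and to verify that it restricts to homotopies on the two subspaces $T^{-1}(\domy(a))$ and $\domy(a)$; this will make $\iota$ and the restriction of $T$ mutually inverse homotopy equivalences. As a preliminary, $\iota$ is well defined because $\domy(a)\subseteq T^{-1}(\domy(a))$: by item \ref{item:cconv} we have $\domy(a)\subseteq\im(T)\subseteq\domx$, so for $y\in\domy(a)$ we may take $x=y$ in \eqref{eq:ctransform}, and since $c(y,y)=-t\log(y\cdot y)=0$ this gives $\psi^c(y)\le\psi(y)$; combining with \eqref{eq:psicoft} and $c\ge 0$ yields $\psi^c(T(y))=\psi(y)+c(y,T(y))\ge\psi(y)\ge\psi^c(y)\ge a$, so $T(y)\in\domy(a)$.

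Next, for each $x\in\domx$ the inner expression of \eqref{eq:deftrans} is, as noted there, a convex combination of the vectors $\phi_x(x_i)$, each of which has inner product $1$ with $x$; hence that expression has inner product $1$ with $x$ as well, so $x\cdot T(x)>0$. Thus $x$ and $T(x)$ are never antipodal, and both lie in the geodesically convex open hemisphere $\hemistrict(T(x))$, which the form of $T$ shows is contained in $\supp(f)=\domx$. Let $H_s(x)$ be the point at parameter $s$ on the minimizing geodesic from $x$ to $T(x)$; then $H\colon\domx\times[0,1]\to\domx$ is continuous with $H_0=\Id_\domx$ and $H_1=T$. The lemma follows once we check: \emph{(i)} $\psi^c(T(H_s(x)))\ge a$ whenever $\psi^c(T(x))\ge a$, and \emph{(ii)} $\psi^c(H_s(y))\ge a$ whenever $\psi^c(y)\ge a$. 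Indeed, (i) says $H$ restricts to a homotopy on $T^{-1}(\domy(a))$ from $\Id$ to $\iota\circ T$, and (ii) says $H$ restricts to a homotopy on $\domy(a)$ from $\Id$ to $T\circ\iota$.

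For (i), as in the proof of Lemma \ref{lem:cconv} I would pass to the circle $S(V)$ with $V=\spn(x,T(x))$, which contains the whole geodesic. By item \ref{item:restrans} of Lemma \ref{lem:rescosim} the transport map of $\bar f=f|_{S(V)}$ is $\bar T=\rho\circ\proj_V\circ T$; since $T(x)\in V$ we have $\bar T(x)=T(x)$, so the angular lift satisfies $\tilde{\bar T}(u_0)=u_1$, where $u_0,u_1$ are the angles of $x,T(x)$ and $|u_1-u_0|<\pi/2$. A short computation from the form \eqref{eq:defcost} of $c$ gives $\psi^c(T(z))\ge\bar\psi^c(\bar T(z))$ for every $z\in S(V)$, with equality at $z=x$. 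Writing $\bar\psi^c(\bar T(\gamma(u)))=A(u,\tilde{\bar T}(u))$ for the function $A$ of the proof of Lemma \ref{lem:onedim} applied to $\bar f$, the identity $A_u(u,\tilde{\bar T}(u))=0$ reduces $\frac{d}{du}A(u,\tilde{\bar T}(u))$ to $A_v(u,\tilde{\bar T}(u))\,\tilde{\bar T}'(u)$; along the arc from $u_0$ to $u_1$, weak monotonicity of $\tilde{\bar T}$ together with $\tilde{\bar T}(u_0)=u_1$ forces $u-\tilde{\bar T}(u)$ to keep the sign opposite to that of $u_1-u_0$, so $A_v(u,\tilde{\bar T}(u))=-t\tan(u-\tilde{\bar T}(u))$ and the direction of travel along the arc cooperate to give $A(u,\tilde{\bar T}(u))\ge A(u_0,\tilde{\bar T}(u_0))$ throughout. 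Hence $\psi^c(T(H_s(x)))\ge\bar\psi^c(\bar T(H_s(x)))\ge\bar\psi^c(\bar T(x))=\psi^c(T(x))\ge a$, which is (i).

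Claim (ii) is where I expect the main difficulty. Here the one-dimensional reduction is less direct: restricting $\psi^c$ itself to $S(V)$ only gives $\psi^c\le\bar\psi^c$, which is the wrong inequality for a lower bound on $\psi^c(H_s(y))$. The approach I would take instead is to show directly that $s\mapsto\psi^c(H_s(y))$ is nondecreasing, so that it stays $\ge\psi^c(y)\ge a$. Where $\psi^c$ is differentiable, the envelope identity $\psi^c(z)=\psi(x^\ast)+c(x^\ast,z)$ with $T(x^\ast)=z$ gives $\grad\psi^c(z)=\grad_z c(x^\ast,z)$, and one must show this vector has non-obtuse inner product with the geodesic tangent pointing toward $T(y)$ all along the path; this should again follow from the monotonicity of $\tilde{\bar T}$, but it genuinely needs care at the set of parameters where the visible data set $\{i:x_i\cdot H_s(y)>0\}$ changes, precisely as the sign of $A_{uu}$ at $v=\tilde{\bar T}(u)$ was used in the proof of Lemma \ref{lem:onedim}. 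Once (ii) is established, the two homotopies above complete the proof.
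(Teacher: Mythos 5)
Your claim (i) is essentially the paper's argument, recast as monotonicity of $u\mapsto A(u,\tilde{\bar T}(u))$ along the arc. The paper proves the same one-dimensional inequality without differentiating: it uses that $T(z)$ is at least as far from $z$ as $T(x)$ (so $c(z,T(z))\ge c(z,T(x))$) and that $x$ minimizes $\psi(\cdot)+c(\cdot,T(x))$, which cleanly avoids the worry about where $\tilde{\bar T}'$ exists. Your version is fine in spirit, but you should phrase it through weak monotonicity of $\tilde{\bar T}$ rather than a derivative, as you yourself flag.

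The real issue is claim (ii), and here you have correctly identified a gap but propose to fill it by a route the paper does not take. The paper never shows that $H$ preserves $\domy(a)$, i.e.\ it never proves $\psi^c(H_s(y))\ge a$. Instead it replaces your (ii) with a second, independent ingredient: the fibers $T^{-1}(y)$ are geodesically convex, hence contractible. This is proved by the same two tools you already used for (i) — restriction to $S(V)$ with $V=\spn(x,x')$ (Lemma~\ref{lem:rescosim}) and weak monotonicity of $\tilde{\bar T}$ (Lemma~\ref{lem:onedim}, item~\ref{item:monotone}): if $T(x)=T(x')=y$, then $\bar T$ is constant on the geodesic arc from $x$ to $x'$ in $S(V)$, so the arc lies in the fiber. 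Contractibility of fibers makes $T\colon T^{-1}(\domy(a))\to\domy(a)$ a homotopy equivalence, and then $\iota T\simeq\Id$ (your claim (i)) forces $\iota$ to be the homotopy inverse, since any two homotopy inverses of $T$ agree up to homotopy. So the argument is: (fiber convexity) $+$ (your (i)) $\Rightarrow$ lemma; your (ii) is neither proved in the paper nor needed.

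As it stands your proposal is incomplete: the envelope/gradient argument for (ii) is not carried out, and you correctly note it ``genuinely needs care'' at the nondifferentiability locus. If you want to keep your structure you would have to finish (ii); the more economical repair is to replace (ii) with the paper's fiber-convexity lemma, which reuses exactly the reduction-to-$S(V)$ and monotonicity machinery you already invoked for (i). One small thing worth keeping from your write-up is the explicit check that $\domy(a)\subseteq T^{-1}(\domy(a))$, via $\psi^c(T(y))=\psi(y)+c(y,T(y))\ge\psi(y)\ge\psi^c(y)$; the paper uses this implicitly but does not spell it out.
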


\begin{proof}

We show that the level sets $T^{-1}(y)$ for $y\in \thesphere$ are either empty or contractible.  
First, we have that $T^{-1}(y)$ is entirely contained in $\hemistrict(y)$.
It suffices to show that for
any points satisfying $T(x)=T(x')=y$,
the unique geodesic arc in $\hemistrict(y)$
connecting them is entirely 
contained in the level set, for then the region
is geodesically convex. This follows from Lemma
\ref{lem:rescosim} applied to the subspace $V$
spanned by $x,x'$ together with item \ref{item:monotone} of Lemma \ref{lem:onedim}.

To show that the inverse equivalence is induced by the inclusion map, we define a homotopy $H:T^{-1}(\domy(a))\times [0,1]\rightarrow T^{-1}(\domy(a))$
between $\iota T$ and the identity, given by
\begin{equation}
\label{eq:defhomotopy}    
H(x,s)= \rho((1-s)x+sT(x)).
\end{equation}
What has to be checked is that
$H(x,s)$ remains in $T^{-1}(\domy(a))$
for all $s$.

Let $y=T(x)$ for some 
$x \in T^{-1}(\domy(a))$.
Using \eqref{eq:psicoft}, 
it suffices to show that for any $z$ 
on the arc connecting $x$ and $y$, we have
\begin{equation}   
\label{eq:cconvarc}
\psi(x)+c(x,T(x))\leq \psi(z)+c(z,T(z))
\end{equation}
First, we easily have that
\begin{equation}
\label{eq:homotopytoonedim} 
\psi(z)+c(z,\rho(\proj_V(T(z))))\leq \psi(z)+c(z,T(z)),
\end{equation}
where $V$ is the two-dimensional subspace spanned by $x$ and $y$, which 
also contains $z$. Then by item \ref{item:restrans} of Lemma \ref{lem:rescosim}, we have reduced the problem to checking \eqref{eq:cconvarc} in the one-dimensional case.

In the one-dimensional case, we have that
\begin{equation}
\label{eq:homotopyonedim}   
\psi(z)+c(z,T(x))\leq 
\psi(z)+c(z,T(z))
\end{equation}
by the increasing property from item \ref{item:monotone} of Lemma \ref{lem:onedim},
which guarantees that $T(z)$ is at least as far from $z$ as $y$, since $z$ 
lies between $x$ and $y$.
Equation \eqref{eq:cconvarc} follows 
since $x$ is a global minimizer of
$\psi(\_)+c(\_,y)$ by Lemma \ref{lem:cconv}.

\end{proof}

We can now prove Theorem \ref{thm:cconv}.

\begin{proof}

Item \ref{item:cconv} has been proved in 
Lemma \ref{lem:cconv}.

Next, it follows from 
Lemma \ref{lem:contractible} that
$\domy(a) \sim T^{-1}(\domy(a))$.
Then to prove the homotopy equivalence statement in item \ref{item:homotopy}, 
it suffices to produce a deformation retraction
\[\ret : T^{-1}(\domy(a)) \times [0,1]
\rightarrow T^{-1}(\domy(a))\]
of $T^{-1}(\domy(a))$ onto $\domx(a)$, which is defined as follows:
for each $x\in T^{-1}(\domy(a))$, let
$x'$ be the closest point to $x$ on the ball
$B_c(y; \psi^c(y)-a)$, where $y=T(x)$ and 
$B_c(y,a)=\left\{c(\_, y)\leq a\right\}$.
The ball is nonempty precisely 
when $x\in T^{-1}(\domy(a))$.
Then we define
\begin{equation}
\label{eq:defret}    
F(x,s)= \rho((1-s)x+sx').
\end{equation}
See Figure \ref{fig:defret}
for an illustration in the two-dimensional case.

By the proof of Lemma \ref{lem:contractible}, every point on the arc connecting $x$ and $y$
(and therefore the arc connecting $x$ and $x'$)
remains in $T^{-1}(\domy(a))$, so 
that $F(x,s) \in T^{-1}(\domy(a))$ for all $s$.
By definition, we have that $F(x,0)=x$ for all $x$.
Since $\psi^c(y)-c(\_,y)\leq \psi(\_)$ and $x'$ is on the ball, we find that 
$\psi(x')\geq a$, so that $F(x,1)\in \domx(a)$.
It follows from \eqref{eq:psicoft} that $x\in B_c(T(x);\psi^c(x)-a)$ whenever $x\in \domx(a)$, so that
$F(x,s)=x$ on that range.


The last item is a consequence of Lemma \ref{lem:rescosim}.

\end{proof}

\begin{figure}[t]
    \centering    
\begin{subfigure}[b]{.49\textwidth}
 \centering
\includegraphics[scale=.8]{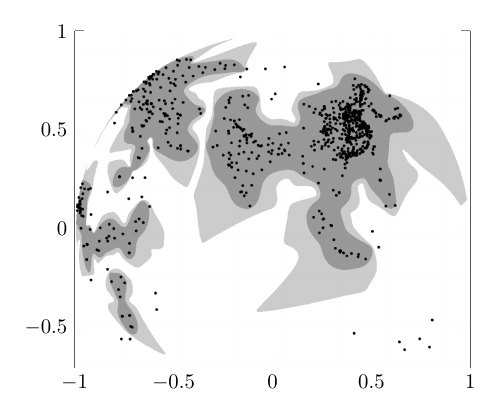}
\end{subfigure}
    \begin{subfigure}[b]{.49\textwidth}
    \centering
\includegraphics[scale=.8]{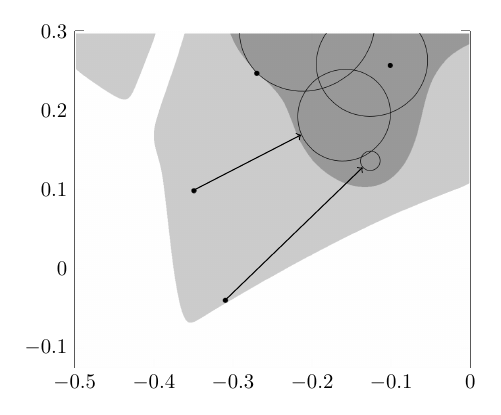}
\end{subfigure}
    \caption{Illustration of the deformation retract $F$ from the proof of Theorem \ref{thm:cconv}, where $\dataset\subset S^2$ 
    is a point cloud of the 1000 most populated world cities \cite{WCD}. 
    The left frame shows the point cloud, as well as the regions $T^{-1}(\domy(a))$ (light gray),
    and $\domx(a)$ (dark gray).
    The retract, shown on the right, moves along the arrow connecting each point $x$ to its nearest point in the ball
    $\left\{c(\_ ,y)\leq \psi^c(y)-a\right\}\subset \domx(a)$ for $y=T(x)$. The ball is nonempty precisely when
   $x\in T^{-1}(\domy(a))$, and the radius shrinks to zero as $x$ approaches the boundary. The trajectory is constant when we start with $x\in \domx(a)$, in which case $x$ is already in the ball. }
    \label{fig:defret}
\end{figure}

\section{Application to a neurological data set}

We summarize a straightforward algorithm for generating finite alpha complexes from 
$f(x)$, which we propose as a tractible but rigorous method for noise reduction and subsampling.
We apply this method to a point cloud 
generated from a remarkable recent article in neuroscience, which finds a toroidal structure in the brain activity of certain  grid cells in rats \cite{Gardner2022}. Software for reproducing this application is described in Section \ref{sec:software} below.

\subsection{Sampling algorithm}

We explain an algorithm for sampling landmarks point from the super-level sets $\domy(a)$, which serve as the vertices of an alpha complex.

Let $\mu$ be the underlying distribution whose density is $f(x)$, with respect to the uniform measure on the sphere. In order to sample from $\mu$, suppose we have chosen $x\in S^{d-1}$. The distribution resulting from sampling 
$\tilde{x}\in \hemistrict(x)$ 
uniformly at random and recording
$c=\tilde{x}\cdot x$ is given by the density
$(1-c)^{(d-1)/2-1}$ with respect to the uniform measure on $[0,1]$. Then scaling this by the kernel function 
$(\tilde{x}\cdot x)^t$ results in a Beta distribution.
We may then sample from $\mu$ by selecting a random index $i$ in proportion to the coefficient $a_i$, 
and then sampling from $\hemistrict(x_i)$ by picking a random great circle through $x_i$, 
and taking $\tilde{x}$ to be the point on that circle whose dot product is given by a sample $\tilde{c}$ as above.

The next proposition shows that this pushforward measure under the transport map coordinate invariant in the same sense as $\psi^c$.
\begin{prop}
Let $\nu=T_*(\mu)$ be the pushforward measure,
and suppose $\dataset\subset S(V)$ for a subspace $V$. 
Then $\nu$ is supported on $S(V)$, and is equal to 
    $\bar{\nu}=\bar{T}_*(\bar{\mu})$ associated to 
    the restriction $\bar{f}$ as in Lemma \ref{lem:rescosim}.
\end{prop}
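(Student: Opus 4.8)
The plan is to reduce the statement to the compatibility results already established in Lemma~\ref{lem:rescosim}, together with a change-of-variables argument for the sampling measure $\mu$. The key observation is that the proposition is really two claims: first, that $\nu = T_*(\mu)$ is supported on $S(V)$; and second, that it agrees with $\bar\nu = \bar T_*(\bar\mu)$. The first claim should follow from the explicit form of the transport map \eqref{eq:deftrans}: since $\dataset \subset S(V)$, each $\phi_x(x_i)$ need not lie in $V$, but item~\ref{item:subtrans} of Lemma~\ref{lem:rescosim} gives $T(x) = T(\rho(\proj_V(x)))$, so the image of $T$ equals the image of $T$ restricted to $S(V)$, which by item~\ref{item:halfcirc}-type reasoning (or directly from \eqref{eq:deftrans} with $x \in S(V)$ and $x_i \in S(V)$) lands in $S(V)$. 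Hence $\im(T) \subseteq S(V)$, and $\nu$ is automatically supported there.

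For the second claim, the natural approach is to factor the measure $\mu$ through the normalized projection $q = \rho \circ \proj_V \colon \supp(f) \dashrightarrow S(V)$. I would first check that $q_*(\mu) = \bar\mu$, i.e. that pushing the density $f$ (against uniform measure on $S^{d-1}$) forward under $q$ yields exactly the density $\bar f$ (against uniform measure on $S(V)$), up to the weight rescaling $\bar a_i = \lVert x_i'\rVert^t a_i$ from \eqref{eq:reskde}. This is where the Beta-distribution bookkeeping in the sampling algorithm becomes relevant: the fiber of $q$ over a point of $S(V)$ is a sphere of complementary dimension, and integrating $K_t(x, x_i) = (x\cdot x_i)^t$ over that fiber (with $x_i \in S(V)$, so $x \cdot x_i = \proj_V(x)\cdot x_i$) produces precisely $K_t(\rho(\proj_V(x)), x_i)$ times a normalization that is independent of $i$ and cancels when we normalize to a probability measure — and the $\lVert x_i'\rVert^t = 1$ here since $x_i \in S(V)$, consistent with Lemma~\ref{lem:rescosim}. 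Then, since \eqref{eq:domtrans} says $T = T \circ q$ on $\supp(f)$ and $\bar T = T|_{S(V)}$ by item~\ref{item:subtrans}, we get $T_*(\mu) = T_*(q_*(\mu)) = \bar T_*(\bar\mu) = \bar\nu$, which is the claim.

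The main obstacle I expect is the change-of-variables computation showing $q_*(\mu) = \bar\mu$: one must correctly identify the conditional distribution of $x$ given $q(x)$ (a uniform measure on the orthogonal sphere $S(V^\perp)$ scaled into $S^{d-1}$, reparametrized by the ``latitude'' $c = \lVert \proj_V(x)\rVert$) and verify that the latitude integral $\int (\text{const}\cdot c)^t \,(1-c^2)^{(\dim V^\perp)/2 - 1}\, dc$ factors out cleanly, leaving no $i$-dependence. This is exactly the Beta-distribution identity invoked informally in the sampling discussion, so it is more of a careful unwinding than a genuinely hard step; everything else is a formal consequence of Lemma~\ref{lem:rescosim}. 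One should also double-check the measure-zero issue that $q$ is undefined where $\proj_V(x) = 0$, but $\mu$ assigns this set measure zero since $f$ vanishes there (as each $K_t(x,x_i)$ with $x_i \in S(V)$ vanishes when $x \perp V$), so it can be safely ignored.
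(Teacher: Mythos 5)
Your proposal matches the paper's proof in structure and substance: factor the sampling measure through the normalized projection $q = \rho\circ\proj_V$, verify $q_*(\mu) = \bar\mu$ by fibering the uniform measure on the sphere and observing that the kernel splits as $K_t(x,x_i) = K_t(\bar x, x_i)\cdot(\text{an $i$-independent latitude factor})$, then conclude via $T = \bar T\circ q$ from item~\ref{item:subtrans} of Lemma~\ref{lem:rescosim}. The only cosmetic difference is that the paper first reduces to codimension one and parametrizes the fiber by a single inclination angle $\theta$, whereas you set up the general-codimension latitude coordinate $c = \lVert\proj_V(x)\rVert$ directly; both are the same Beta-distribution computation, and your separate remark that $\im(T)\subset S(V)$ is in fact already implied by the identity $T_*(\mu)=\bar T_*(\bar\mu)$.
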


\begin{proof}

Repeating the statement, 
we may assume that $V$ has codimension 1.
For each $x_i\in \dataset$, let us parametrize
$\hemistrict(x_i)$ by a point 
$\bar{x}\in \bar{H}^+(x_i)=\hemistrict(x_i) \cap S(V)$ together with an inclination
$\theta \in (0,\pi)$. Then the resulting uniform measure on $\hemistrict(x_i)$ can be described by
$\sin(\theta)d\theta$ times the uniform measure on
$\bar{H}^+(x_i)$. The kernel also factors as
$K_t(x,x_i)=K_t(\bar{x},x_i) \sin^t(\theta)$, so we see that the underlying distribution $\bar{\mu}$ of 
$\bar{f}(x)$ is the pushforward of $\mu$ under the projection map $x\mapsto \rho(\proj_V(x))$. Then the statement follows from 
item \ref{item:subtrans} of Lemma \ref{lem:rescosim}.

\end{proof}

Given $f(x)$, we now have an algorithm for generating an alpha complex
which models $\domy(a)$, similar to the one from \cite{carlsson2024kernel}.
Recall that the unweighted alpha complex 
$\Alpha(S,r)$ is defined as the nerve of the covering by balls of radius $r$ centered at the vertices of $S$, intersected with the corresponding Voronoi cells at those same points \cite{edelsbrunner2010computational}.
\begin{alg} Suppose we are given 
$\dataset,t,a$ as in Theorem \ref{thm:cconv}, as well as a separation parameter $0<s<1$, and a sampling number $M\gg 0$. We generate an alpha complex as follows:
\label{alg:sampshape}
    \begin{enumerate}
    \item Sample $M$ points $\{\tilde{x}_1,...,\tilde{x}_M\}$ from the distribution of $\mu$ as above,
    and simulataneously compute the value
    $\tilde{y}_i=T(\tilde{x}_i)$, as well as
    $\tilde{a}_i=\phi^c(\tilde{y}_i)$ using 
    \eqref{eq:psicoft}.
\item  Initialize a vertex set $S=\emptyset$.
for each $\tilde{y}_i$ in decreasing order of 
$\tilde{a}_i$, add $\tilde{y}_i$ to $S$ if
$\tilde{a}_i\geq a$ and 
$K_t(\tilde{y}_i,\tilde{y_j})\geq s$ for all $\tilde{y}_j$ already added to $S$.
\item 
Construct the unweighted alpha complex 
$\Alpha(S,r)$ 
associated to $r=\sqrt{2-2s^{-t}}$, the corresponding distance in the sphere,
using the algorithm of \cite{carlsson2023alpha} or another effective algorithm.
\end{enumerate}
\end{alg}
Note that this algorithm would be impractical using $\domx(a),\psi$ instead of $\domy(a),\psi^c$, due the dependence of the $r$-covering number of $\domx(a)$ 
on the embedding dimension $d$.
If $M$ is still not sufficiently large to 
fill out $\domy(a)$, one might also use an enlarged radius of $(1+\epsilon)r$ in the alpha complex.

\subsection{Grid cells}

We illustrate this method using a 
data set from \cite{Gardner2022}, 
which studied neural activity in rats as they moved in a confined space. 
Among other discoveries,
they found that the brain activity of certain modules of \emph{grid cells}, which play a role in measuring physical location, were concentrated along the surface of a torus associated to different lattice structures in $2$-dimensional space. This was confirmed using persistent cohomology, specifically the Vietoris-Rips construction, which found persistent Betti numbers
consistent with a torus, namely $(\betti_0,\betti_1,\betti_2)\sim (1,2,1)$.

In the experimental setup, while the rat was alert and free to move about an open area, the excitations of grid cells, referred to as spikes, were recorded.
Within each of several modules, the spikes were collected into a $\timesteps\times \neurons$ matrix $A$ as a sequence of voltage thresholds from 0 through 4, where $\timesteps$ represents the number of time steps, and $\neurons$ represents the number of neurons. Typical values would involve a few hundred thousand time steps, and about $100-200$ neurons.

Before taking persistent cohomology, several steps were required to process the data. In order, these were: temporal smoothing, which involved convolving the time direction by a Gaussian; subsampling down to $\timesteps=25000$ 
by choosing well-spaced ``active times,''
which were rows of high $L^1$-norm;
normalizing the mean and variance of each of the $\neurons$-columns; dimension reduction using a PCA to dimension 6; additional down-sampling to 
about 1200 points which lie near a low-dimensional manifold, using a combination of $k$-nearest neighbors density estimation, a method known as topological denoising 
\cite{carlsson2009denoising}, and certain aspects of the UMAP algorithm \cite{McInnes2018}. Persistent cohomology was then calculated using the Ripser software
\cite{bauer2021ripser}, revealing the Betti numbers of a 2-dimensional torus.

\subsection{Spherical alpha shapes method}

We used Algorithm \ref{alg:sampshape}
as a replacement for the steps 
related to subsampling and noise reduction,
leaving only elementary preprocessing steps.
Specifically, starting with the above point cloud $A$, for a particularly module with $\timesteps=126728$ and $\neurons=111$, we applied the following:
\begin{enumerate}
    \item Apply Gaussian smoothing in the time direction, using the same code used in \cite{Gardner2022}.
\item Normalize each of the 111 columns to have the mean zero and variance one.
\item Apply an SVD to reduce the dimension to 6.
\item Normalize each of the 126728 rows to have radius 1.
\end{enumerate}
This resulted in a point cloud $\dataset \subset S^5$ of size 126728 in the $5$-sphere. 

We then selected a value of $t=1/h^2$ with $h=.3$, and a cutoff value of $a$ with the property that $\psi(x_i)\geq a$ for about $90\%$ of the points $x_i\in \dataset$.
We then applied Algorithm \ref{alg:sampshape} using the parameter $s=.4$ and $M=100000$ (more than enough) samples
to generate a vertex set $S\subset S^5$ of size only $218$. Raising the value of $M$ did not result in significantly more points. The resulting alpha complex $X=\Alpha(S,r)$, computed up to the 3-simplices using $1.1\times r$ for the radius, had $(\#X_0,\#X_1,\#X_2,\#X_3)=(218,1769,3912,3566)$ simplices in each dimension. We calculated Betti numbers of
$(1,2,1)$ exactly, without persistence.

Next, we took the times at which the rat's brain signal was in the Voronoi cell of a given vertex,
and plotted its known spatial coordinates at that location, revealing a periodic pattern similar to the one predicted in \cite{Gardner2022}. Finally, we raised $s$ to about .6, thereby increasing the number of vertices to about 820. Using the 1-skeleton of the resulting alpha complex
$X'$, we generated a low-dimensional embedding into $\mathbb{R}^3$. This was done using a method based on KL-divergence, with some similarities to $t$-SNE, but using the Metropolis algorithm with a high enough temperature parameter to avoid local optima. 
The results are shown in Figure \ref{fig:donuts}.

\begin{figure}[t]
    \centering    
\begin{subfigure}[b]{.32\textwidth}
    \centering
\includegraphics[scale=.65]{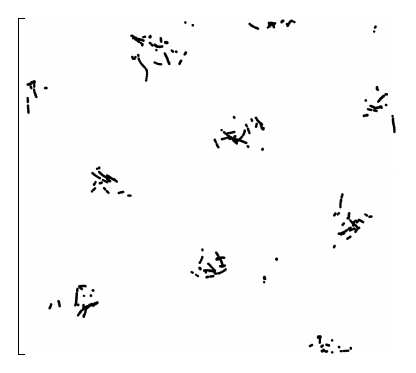}
\end{subfigure}
\begin{subfigure}[b]{.32\textwidth}
 \centering
\includegraphics[scale=.65]{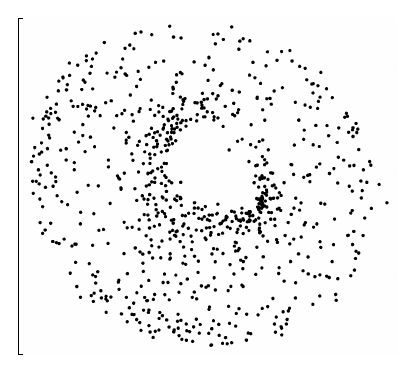}
\end{subfigure}
    \begin{subfigure}[b]{.32\textwidth}
    \centering
\includegraphics[scale=.65]{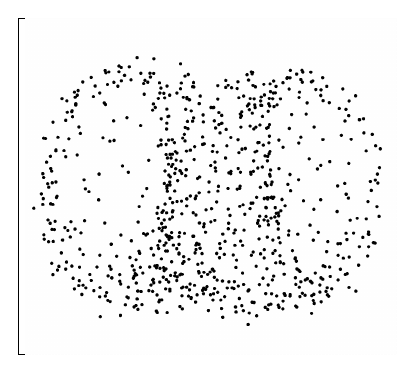}
\end{subfigure}
    \caption
    {In the left frame, the true locations of the rat when the brain signal is in the Voronoi cell of a particular vertex of $X$, an alpha complex with 218 vertices built from only the neural activity. 
    In the middle and right frames, we have two different views of a 3D-embedding generated from the graph associated to the 1-skeleton of $X'$, a similar alpha complex with 820 vertices. }
    \label{fig:donuts}
\end{figure}

\subsection{Further extensions}

We have shown how to apply the constructions resulting from Theorem \ref{thm:cconv} in place of a standard pipeline related to noise reduction and subsampling in TDA.
Using these methods 
provided the benefit of a 
theoretical connection to
density super-level sets, and also
required only 
statistically intrinsic hyperparameters which are stable with respect to changes in the size of the point cloud,
unlike other choices such as the 
``$k$'' in $k$-nearest neighbors.
We also found that the resulting alpha complexes required considerably fewer vertices, and were well-suited for generating low-dimensional embeddings.

However, there may be context-specific changes that could help extend these methods to other modules or other cell types. For instance, one challenge is that the density is not evenly distributed over the torus, so that the cut-off parameter $a$ may have to vary accross other modules. One modification to address this would be to adjust the weights $a_i$ defining $f$ so that that the density of the true positions is uniformly distributed over the enclosed domain.
We hope this explore this and other extensions  in future papers.

\subsection{Software availability}

\label{sec:software}

The implementation of the code for generating the transport maps and spherical 
alpha shapes can be 
found at the first author's website
\url{https://www.math.ucdavis.edu/~ecarlsson/}
under ``Software.'' 
To acquire the data set to implement the grid cells application,
we streamlined the unarchival script originally provided by \cite{Gardner2022} in the corresponding github repository.
Our code is located at 
\url{https://github.com/gdepaul/DensiTDA/tree/main}. In order to run the github, follow the installation steps listed in the 
\texttt{README} file. This will also require downloading the original dataset from
\cite{gardner2022figshre}
and placing it in the same directory.  

\bibliographystyle{plain}
\bibliography{refs}

\end{document}